\documentclass[12pt]{amsart}

\usepackage{amsmath, amssymb, graphics, setspace}
\usepackage[usenames]{color}
\input xypic

\usepackage{algorithm}
\usepackage[noend]{algpseudocode}

\algblock{Input}{EndInput}
\algnotext{EndInput}
\algblock{Output}{EndOutput}
\algnotext{EndOutput}

\makeatletter
\def\BState{\State\hskip-\ALG@thistlm}
\makeatother

\usepackage{verbatim}
\usepackage{url}
\usepackage{bm}
\usepackage{tikz}
\usepackage{algorithm}
\usepackage[noend]{algpseudocode}

\newcommand{\mathsym}[1]{{}}
\newcommand{\unicode}[1]{{}}

\makeatletter
\def\BState{\State\hskip-\ALG@thistlm}
\makeatother

\usepackage{amsmath}
\usepackage[centertags]{amsmath}
\usepackage{amsfonts}
\usepackage{amssymb}
\usepackage{amsthm}
\usepackage{newlfont}
\usepackage{url}
\usepackage{bm}
\usepackage{chngcntr}
\usepackage{tikz}
\theoremstyle{plain}
\newtheorem{thm}{Theorem}

\newtheorem{lem}[thm]{Lemma}
\newtheorem{lem*}[thm]{Lemma}
\newtheorem{prop}[thm]{Proposition}

\theoremstyle{definition}

\theoremstyle{remark}
\newtheorem{rem}{Remark}
\newtheorem{rem*}{Remark}

\newtheorem{example}[rem]{Example}

\numberwithin{rem}{section} 
\numberwithin{dfn}{section} 
\numberwithin{equation}{section} 
\numberwithin{thm}{section} 

\def\!{\operatorname{!}}

\def\R{\mathbb R}

\def\1{\bold 1}

\def\deg{\operatorname{deg}}

\def\deg{\operatorname{deg}}

\usepackage{amssymb}

\usepackage{enumitem}

\usepackage{graphicx}
\usepackage{xy}
\usepackage{float}

\makeatletter
\@namedef{subjclassname@2020}{%
	\textup{2020} Mathematics Subject Classification}
\makeatother

\usepackage[T1]{fontenc}

\newtheorem{theorem}{Theorem}[section]

\newtheorem{lemma}[theorem]{Lemma}

\theoremstyle{definition}
\newtheorem{definition}[theorem]{Definition}
\newtheorem{remark}[theorem]{Remark}

\numberwithin{equation}{section}

\title[ Banded Reduction of Integrals... ]{\textbf{Banded Reduction of Integrals for Hyperexponential and Algebraic Functions}}
\author{Piotr Kraso{\'n} \and Jan Milewski}
\date{\today}

	\address{Department of Mathematics, Ariel University, Ariel 40700, Israel}
	\email{ piotrkras26@gmail.com}
    
    \address{  Institute of Mathematics\\ 
    Pozna{\'n} University of Technology\\
    60-965 Pozna{\'n}\\
    Poland
    }
    \email{jsmilew@wp.pl}
\keywords{finite-band reduction, 
symbolic integration, 
hyperexponential functions, 
algebraic functions, 
recurrence relations, 
Schwarz--Christoffel integrals, 
holonomic functions}
\subjclass[2020]{Primary 68W30; Secondary 33F10, 12H05, 65Q30}

\begin{document}

\maketitle

\begin{abstract}
We develop a finite-band method for the reduction of families of
indefinite integrals. The starting point is the case in which the
logarithmic derivative $K'(x)/K(x)$ is a rational function. This leads
to adapted polynomial and Laurent-type bases and to upper triangular
band matrices. The framework includes generalized real
Schwarz--Christoffel integrals and more general hyperexponential
weights. We then extend the construction to the case in which the
logarithmic derivative $K'(x)/K(x)$ is algebraic over the field of real
rational functions. The associated finite-dimensional differential
module gives rise naturally to block-band systems. Finally, we show
that functions satisfying linear differential equations with real
polynomial coefficients give rise to finite recurrences for their
moments. The resulting reduction is effective: for a prescribed target
integral it is enough to solve a finite triangular band system, and only
the reduction data associated with the target are required. We also
discuss arithmetic complexity and give explicit examples illustrating
the scalar and block reductions. The algorithms are implemented in the
accompanying Mathematica package \texttt{BandedReduction}.
\end{abstract}

\section{Introduction}

Reduction of integrals by differential relations is a classical theme
in symbolic integration and in the algorithmic study of special
functions. Classical approaches include Hermite-type reduction for
algebraic functions and the symbolic integration of hyperexponential
terms; see \cite{bronstein1,bronstein2,trager}. A closely related
algorithmic viewpoint is provided by holonomic functions and creative
telescoping, where differential equations and recurrences are used to
control families of functions and definite or parameter-dependent
integrals; see \cite{az,zeilberger,chyzak}.

The purpose of this paper is to develop a finite-band approach to the
reduction of families of indefinite integrals. Our basic objects are the
moments
\[
I_n(x)=\int x^nK(x)\,dx,
\qquad n\geq0,
\]
or, more generally, the shifted moments
\[
I_{n,p}(x)=\int (x-p)^nK(x)\,dx.
\]
When the factor $K$ satisfies an appropriate differential relation,
differentiation of suitable multiples of $K$ produces linear relations
involving only finitely many neighbouring moments. Ordering these
relations by the moment index leads to a banded triangular system. A
prescribed high-order moment can therefore be reduced to a finite
family of basic integrals together with explicit boundary terms.

The reduction mechanism is related to the classical Christoffel-type
approach to the reduction of moments. Our purpose is not to replace
this underlying principle, but to identify and exploit the finite-band
structure that arises for hyperexponential and algebraic integrands.
This structure makes it possible to determine explicitly the finite
range of indices involved in the reduction and leads to an effective
finite-dimensional reduction procedure.

Our first setting is the case in which the logarithmic derivative
\[
\frac{K'(x)}{K(x)}
\]
is a rational function. This is a standard class in symbolic
integration and includes, in particular, functions of the form
\[
K(x)=e^{q(x)}\prod_{j=1}^{s}p_j(x)^{\alpha_j},
\]
where $q$ and the $p_j$ are real polynomials and
$\alpha_j\in\mathbb R$. We work on intervals on which the chosen real
branches of the factors $p_j(x)^{\alpha_j}$ are defined and $K$ is
nonzero. Such functions are closely related to the hyperexponential
framework of symbolic integration \cite{bronstein1,bronstein2}.
Relations between hypergeometric functions and integral
representations have also been extensively studied; see, for example,
\cite{bry,exton,kniehl,kratt,vidunas}.

A principal family of examples comes from the Schwarz--Christoffel
setting. Schwarz--Christoffel transformations provide a classical and
computationally useful method for conformal mapping of polygonal
domains; see, for example, \cite{driscoll,trefethen}. The associated
integral representations are closely related to multivariable
hypergeometric functions of Lauricella type \cite{lauricella}.
The generalized real Schwarz--Christoffel integrals considered in this
paper also belong to a broader class of problems related to
hyperelliptic integrals; see \cite{km}.

The present paper should be viewed as a continuation and substantial
generalization of the reduction approach developed in \cite{km}. The
earlier work treats a particular class of real hyperelliptic integrals
and derives recurrence relations adapted to that setting. Here we
identify the differential mechanism underlying those recurrences and
formulate it in a more general finite-band framework. The change of
viewpoint is structural rather than merely notational: starting from
differential relations satisfied by the integrand factor $K$, we obtain
a unified reduction procedure that applies to rational logarithmic
derivatives, algebraic logarithmic derivatives, and, more generally,
linear differential equations with polynomial coefficients. The
generalized real Schwarz--Christoffel integrals are retained as an
important concrete specialization of this broader construction.

The first main part of the paper establishes a finite-band reduction
scheme for rational logarithmic derivatives. We construct adapted
polynomial and Laurent-type bases, identify the corresponding upper
triangular band matrices, and isolate the possible resonant indices.
In the nonresonant case this yields a finite family of basic integrals
to which the whole family can be reduced.

We next consider the case in which

$$
\frac{K'}{K}
$$
is algebraic over $\R(x)$. Classical symbolic integration of algebraic
functions is based on algebraic reduction procedures, including
Hermite-type methods; see \cite{trager,malykh}. Our construction has a
different emphasis: the moments are organized into a finite-dimensional
differential module, and the scalar band matrices are replaced by
block-band matrices.

Finally, we consider arbitrary linear differential equations with
real polynomial coefficients,
\[
a_r(x)K^{(r)}+\cdots+a_0(x)K=0.
\]

This places the preceding constructions in a broader holonomic
framework \cite{az,chyzak,zeilberger}. The essential point is not merely
the existence of a recurrence, but its finite-band structure and the
resulting effective reduction of a prescribed moment.

The computational aspect of the method is equally important. For a
fixed target index one does not need an inverse of an infinite matrix;
only a finite triangular truncation is involved, and in particular only
the data required for the target reduction need to be computed.
Standard backward substitution for triangular linear systems is used
at this stage \cite{quarteroni,stoer}. Explicit formulas for inverses of
triangular matrices are also closely related to the computations
appearing here \cite{bd}.

The algorithms developed in the paper are implemented in the accompanying
Mathematica package \emph{BandedReduction: Banded Reduction of Integrals for
Hyperexponential and Algebraic Functions} \cite{BandedReduction}, referred to
below simply as \texttt{BandedReduction}. The package provides a user-level
implementation of the scalar, Schwarz--Christoffel, algebraic block-band, and
polynomial-ODE reductions discussed below, together with validation tests and
readable reduction reports. The package is archived on Zenodo under DOI
10.5281/zenodo.22980629.

The paper is organized as follows. Section~2 develops the scalar
first-order construction. Section~3 applies it to generalized real
Schwarz--Christoffel integrals. Section~4 treats algebraic logarithmic
derivatives and the resulting block-band systems. Section~5 derives
finite moment recurrences from polynomial-coefficient differential
equations. Section~6 describes the reduction algorithm, discusses its
arithmetic complexity, and presents explicit examples.

Throughout the paper, $\mathbb R[x]$ denotes the ring of real
polynomials in the variable $x$, and $\mathbb R(x)$ denotes its field
of rational functions. For $A,B\in\mathbb R[x]$, the condition
$\gcd(A,B)=1$ means that $A$ and $B$ are relatively prime. We write
$M_d(F)$ for the ring of $d\times d$ matrices over a field $F$.

\section{A first-order differential reduction scheme}

Let $J\subset\mathbb R$ be an interval and let
$K:J\to\mathbb R\setminus\{0\}$ be a differentiable function. We assume
throughout this section that
\begin{equation}
\frac{K'(x)}{K(x)}
=
\frac{A(x)}{B(x)},
\qquad
A,B\in\mathbb R[x],\qquad \gcd(A,B)=1.
\label{eq:2.1}
\end{equation}
Thus the logarithmic derivative of $K$ is rational. This class includes,
in particular, functions of the form
\[
K(x)=e^{q(x)}
\prod_{j=1}^{s}p_j(x)^{\alpha_j},
\qquad
q,p_j\in\mathbb R[x],\quad \alpha_j\in\mathbb R,
\]
on any interval on which the chosen real branches of the factors
$p_j(x)^{\alpha_j}$ are defined and $K$ is nonzero. Such functions
belong to the standard hyperexponential framework of symbolic
integration; see \cite{bronstein1,bronstein2}.

Fix $p\in\mathbb R$ such that
\[
B(p)\neq0
\]
and put
\[
u=x-p.
\]
For $n\in\mathbb Z$ define
\[
I_{n,p}(x)=\int u^nK(x)\,dx.
\]
Set
\[
W(x)=B'(x)+A(x).
\]

\begin{lem}
For every $n\in\mathbb Z$,
\begin{equation}
\frac{d}{dx}\left(u^nB(x)K(x)\right)
=
\left[
nu^{n-1}B(x)+u^nW(x)
\right]K(x).
\label{eq:2.2}
\end{equation}
\end{lem}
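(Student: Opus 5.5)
The identity follows from the product rule together with the defining relation \eqref{eq:2.1}. Since $u=x-p$, we have $du/dx=1$, so $\frac{d}{dx}u^n=nu^{n-1}$ for every $n\in\mathbb Z$. For $n<0$ this requires $u\neq0$, i.e.\ $x\neq p$. We therefore read the statement for negative $n$ on subintervals of $J$ not containing $p$. This is the only point needing care. The assumption $B(p)\neq0$ plays no role in the identity itself; it will matter only later for the Laurent-type bases.

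Differentiating the triple product $u^nBK$ gives
\[
\frac{d}{dx}\left(u^nBK\right)=nu^{n-1}BK+u^nB'K+u^nBK'.
\]
The key step is to rewrite the last term using \eqref{eq:2.1}. Because $K$ never vanishes on $J$, we have $K'=(A/B)K$ wherever $B\neq0$, and hence $BK'=AK$ there. At isolated zeros of $B$ in $J$ the relation $BK'=AK$ still holds: both sides are continuous, and zeros of the nonzero polynomial $B$ are isolated. One may equivalently take $BK'=AK$ as the precise meaning of \eqref{eq:2.1}.

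Substituting $BK'=AK$ gives
\[
\frac{d}{dx}\left(u^nBK\right)=\left[nu^{n-1}B+u^n(B'+A)\right]K=\left[nu^{n-1}B+u^nW\right]K,
\]
which is \eqref{eq:2.2}. I do not expect a genuine obstacle. The only subtleties are the domain caveat for $n<0$ and justifying $BK'=AK$ at possible zeros of $B$, and both are handled above.
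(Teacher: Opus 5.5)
Your argument is the paper's own: product rule, $u'=1$, and the substitution $BK'=AK$ coming from \eqref{eq:2.1}. The caveat $x\neq p$ for $n<0$ is a reasonable addition.

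Your continuity remark about zeros of $B$ is wrong, though it does no harm. At a zero $x_0\in J$ of $B$ you would have $B(x_0)K'(x_0)=0$ but $A(x_0)K(x_0)\neq0$, because $\gcd(A,B)=1$ and $K\neq0$. So the relation would fail there rather than persist by continuity. The case simply cannot occur: \eqref{eq:2.1} on $J$, or even $BK'=AK$ on $J$, forces $B\neq0$ throughout $J$.
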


\begin{proof}
Since $u'=1$ and $K'/K=A/B$,
\[
\begin{aligned}
\frac{d}{dx}\left(u^nBK\right)
&=
nu^{n-1}BK
+u^n(B'K+BK')\\
&=
\left[
nu^{n-1}B+u^n(B'+A)
\right]K.
\end{aligned}
\]
\end{proof}

Write
\begin{equation}
B(x)=\sum_{j=0}^{b}c_j u^j,
\qquad
W(x)=\sum_{j=0}^{w}d_j u^j,
\label{eq:2.3}
\end{equation}
and put
\begin{equation}
h=\max\{b,w+1\}.
\label{eq:2.4}
\end{equation}

\begin{prop}
For every $n\in\mathbb Z$,
\begin{equation}
\sum_{j=0}^{b}nc_jI_{n+j-1,p}
+
\sum_{j=0}^{w}d_jI_{n+j,p}
=
u^nB(x)K(x)+C.
\label{eq:2.5}
\end{equation}
Consequently, the family $\{I_{n,p}\}_{n\in\mathbb Z}$ satisfies a
finite recurrence in the index $n$, with coefficients affine in $n$.
\end{prop}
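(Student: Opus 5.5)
The plan is to integrate the identity \eqref{eq:2.2} of the preceding Lemma and expand its right-hand side in powers of $u$ using \eqref{eq:2.3}. Since $u'=1$, the polynomials $B$ and $W$ can be rewritten as polynomials in $u=x-p$ by Taylor expansion at $p$; this gives the coefficients $c_j$ and $d_j$, with $c_0=B(p)\neq0$. Substituting these expansions into the bracket in \eqref{eq:2.2} gives
\[
nu^{n-1}B+u^nW=\sum_{j=0}^{b}nc_ju^{n+j-1}+\sum_{j=0}^{w}d_ju^{n+j}.
\]
This is a finite linear combination of integer powers of $u$.

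Next I will multiply by $K$ and take antiderivatives on $J$. On the right this gives $\sum_j nc_jI_{n+j-1,p}+\sum_j d_jI_{n+j,p}$, by linearity and the definition of $I_{m,p}$. On the left the antiderivative of the derivative is $u^nBK$ up to an additive constant. Equating the two sides yields \eqref{eq:2.5}.

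The one point that needs care is negative $n$, since then $u^n$ is singular at $x=p$. I will handle it by interpreting $J$ (or a subinterval of it) as avoiding $p$ whenever negative indices occur. On such an interval every term is differentiable, \eqref{eq:2.2} holds there, and the indefinite integrals are defined up to constants. All constants are then absorbed into the single $C$.

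For the final claim, I will read \eqref{eq:2.5} as a linear relation among the moments with indices from $n-1$ to $n+\max\{b-1,w\}=n-1+h$, where $h$ is as in \eqref{eq:2.4}. Its coefficients are $nc_j$ and $d_j$, and each of these is affine in $n$. The inhomogeneous term $u^nBK$ is explicit, so \eqref{eq:2.5} is a finite recurrence of bandwidth at most $h+1$.

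I do not expect a real obstacle here. The only thing to check is that the index bookkeeping matches $h$: the term $nc_ju^{n+j-1}$ contributes indices up to $n+b-1$, and the term $d_ju^{n+j}$ contributes indices up to $n+w$.
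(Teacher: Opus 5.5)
Your proposal is correct and takes the same approach as the paper: substitute the $u$-expansions \eqref{eq:2.3} of $B$ and $W$ into \eqref{eq:2.2} and integrate term by term. Your index range $n-1,\dots,n-1+h$ is consistent with \eqref{eq:2.4}, and your extra care for negative $n$ (working on an interval that avoids $x=p$) is a harmless refinement that the paper leaves implicit.
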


\begin{proof}
Substituting \eqref{eq:2.3} into \eqref{eq:2.2} and integrating gives
\eqref{eq:2.5}.
\end{proof}

\subsection{The associated band matrix}

Set
\[
m=n+h-1.
\]
Then \eqref{eq:2.5} becomes
\begin{equation}
\sum_{l=m-h}^{m}D_{l,m}I_{l,p}
=
u^{m-h+1}B(x)K(x)+C,
\label{eq:2.6}
\end{equation}
where
\begin{equation}
D_{l,m}
=
(m-h+1)c_{l-m+h}
+d_{l-m+h-1}.
\label{eq:2.7}
\end{equation}
We understand $c_j=d_j=0$ whenever the index is outside the ranges
specified in \eqref{eq:2.3}.
next consider
\begin{prop}
The matrix
\[
D=(D_{l,m})_{l,m\geq0}
\]
is upper triangular and has finite bandwidth. More precisely,
\[
D_{l,m}=0
\qquad\text{if}\qquad
l<m-h\quad\text{or}\quad l>m.
\]
Its diagonal entries are
\begin{equation}
D_{m,m}
=
(m-h+1)c_h+d_{h-1}.
\label{eq:2.8}
\end{equation}
\end{prop}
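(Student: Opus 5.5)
The argument only needs the explicit formula \eqref{eq:2.7} for $D_{l,m}$, together with the convention that $c_j=d_j=0$ when $j$ lies outside the ranges in \eqref{eq:2.3}. I would set $k=l-m+h$, so that
\[
D_{l,m}=(m-h+1)c_k+d_{k-1}.
\]
The claim then reduces to deciding for which $k$ the coefficients $c_k$ and $d_{k-1}$ can be nonzero.

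First I treat the lower edge of the band. If $l<m-h$, then $k<0$, so $c_k=0$. Also $k-1<-1<0$, so $d_{k-1}=0$. Hence $D_{l,m}=0$.

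Next I treat the upper edge. If $l>m$, then $k>h$, so $k\ge h+1$. By \eqref{eq:2.4} we have $h\ge b$, so $k>b$ and $c_k=0$. Likewise $h\ge w+1$ gives $k-1\ge h\ge w+1>w$, so $d_{k-1}=0$. Again $D_{l,m}=0$.

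Combining the two cases, every nonzero entry of column $m$ lies in rows $m-h\le l\le m$. The matrix is therefore upper triangular with at most $h+1$ nonzero diagonals, which is the finite-bandwidth claim.

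For the diagonal I put $l=m$, so $k=h$, and \eqref{eq:2.7} gives \eqref{eq:2.8} directly. Here I would remark that $c_h$ or $d_{h-1}$ may vanish, depending on which term attains the maximum in \eqref{eq:2.4}. The formula is still correct under the zero convention.

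There is one minor bookkeeping point to watch. The matrix is indexed by $l,m\ge0$, while \eqref{eq:2.6} formally involves indices $l\ge m-h$ that may be negative. These entries are simply outside the displayed matrix, and the vanishing statement concerns only the formula \eqref{eq:2.7} itself. Beyond this point I expect no real obstacle; the proof is a direct index check.
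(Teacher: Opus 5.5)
Your proof is correct and follows the paper's argument. The paper just says the vanishing pattern follows immediately from \eqref{eq:2.7} and that $l=m$ gives \eqref{eq:2.8}. Your substitution $k=l-m+h$ spells out that index check: $k<0$ kills both terms, and $k\ge h+1$ forces $k>b$ and $k-1>w$ because $h=\max\{b,w+1\}$.
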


\begin{proof}
The asserted vanishing follows immediately from \eqref{eq:2.7}.
Putting $l=m$ gives \eqref{eq:2.8}.
\end{proof}

\begin{definition}
For $m\geq h-1$, we call $m$ \emph{admissible}. An admissible index is
called \emph{nonresonant} if
\[
D_{m,m}\neq0,
\]
and \emph{resonant} otherwise.
\end{definition}

\begin{remark}
The diagonal coefficient
\[
D_{m,m}=(m-h+1)c_h+d_{h-1}
\]
is an affine function of $m$. Hence, unless it is identically zero, there
is at most one resonant admissible index.
\end{remark}

\subsection{Adapted polynomial basis}

For the initial indices
\[
0\leq m\leq h-1,
\]
put
\[
\phi_m(u)=u^m.
\]
For every $m\geq h$, define
\begin{equation}
\phi_m(u)
=
\sum_{l=m-h}^{m}D_{l,m}u^l.
\label{eq:2.9}
\end{equation}

Since $m\geq h$, all powers occurring in \eqref{eq:2.9} are
nonnegative. Thus $\phi_m$ is a polynomial.

By \eqref{eq:2.6},
\begin{equation}
\int\phi_m(u)K(x)\,dx
=
u^{m-h+1}B(x)K(x)+C,
\qquad m\geq h.
\label{eq:2.10}
\end{equation}

\begin{lem}
Let $m\geq h$. If $m$ is nonresonant, then
\[
\deg\phi_m=m.
\]
\end{lem}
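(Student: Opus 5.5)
The plan is to read the degree of $\phi_m$ directly off the defining sum \eqref{eq:2.9} and then check that the top coefficient is nonzero. For $m\geq h$ the polynomial $\phi_m(u)=\sum_{l=m-h}^{m}D_{l,m}u^l$ involves only powers $u^l$ with $m-h\leq l\leq m$, and these are nonnegative because $m\geq h$. Hence $\deg\phi_m\leq m$ automatically, and the coefficient of $u^m$ is exactly $D_{m,m}$.

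By \eqref{eq:2.8}, this top coefficient is $D_{m,m}=(m-h+1)c_h+d_{h-1}$, with the convention that $c_j$ and $d_j$ vanish outside their ranges in \eqref{eq:2.3}. The definition of nonresonance for an admissible index $m$ (and $m\geq h>h-1$ is admissible) is precisely $D_{m,m}\neq0$. So the leading coefficient of $\phi_m$ is nonzero, which gives $\deg\phi_m=m$.

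The main point to check, and the only place any care is needed, is the identification of the coefficient of $u^m$. The powers $u^l$ in \eqref{eq:2.9} are distinct monomials in $u$, so no other terms contribute to $u^m$. The entry $D_{m,m}$ itself is obtained from \eqref{eq:2.7} by setting $l=m$, which gives $(m-h+1)c_h+d_{h-1}$. Since $h=\max\{b,w+1\}$, at least one of $c_h$ and $d_{h-1}$ is a genuine leading coefficient of $B$ or $W$ rather than a padded zero. This fact is not needed for the argument, however: nonresonance already guarantees $D_{m,m}\neq0$ directly.
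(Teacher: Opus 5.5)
Your proposal is correct and follows the paper's own argument: the terms of \eqref{eq:2.9} have degree at most $m$, the coefficient of $u^m$ is $D_{m,m}$, and nonresonance means exactly $D_{m,m}\neq0$. Your added remark about $h=\max\{b,w+1\}$ is accurate but, as you say, not needed.
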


\begin{proof}
All terms in \eqref{eq:2.9} have degree at most $m$, and the coefficient
of $u^m$ is $D_{m,m}$. If $m$ is nonresonant, this coefficient is
nonzero, and therefore $\deg\phi_m=m$.
\end{proof}

\begin{prop}
Suppose that
\[
D_{m,m}\neq0
\qquad\text{for all }m\geq h.
\]
Then the family
\[
\{\phi_m\}_{m\geq0}
\]
is a basis of $\mathbb R[u]$.
\end{prop}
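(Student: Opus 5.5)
The plan is to show that $\{\phi_m\}_{m\geq0}$ is a family of polynomials with $\deg\phi_m=m$ for every $m\geq0$. Once this holds, the standard triangularity argument gives the basis property.

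First, $\deg\phi_m=m$ for all $m\geq0$. For $0\leq m\leq h-1$ we have $\phi_m=u^m$ by definition, so the degree is $m$. For $m\geq h$, the hypothesis $D_{m,m}\neq0$ means that every such $m$ is nonresonant. The preceding lemma then gives $\deg\phi_m=m$, with leading coefficient $D_{m,m}$. Moreover $\phi_m$ is genuinely a polynomial in $u$, as observed after \eqref{eq:2.9}.

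Next, fix $N\geq0$ and show that $\phi_0,\dots,\phi_N$ form a basis of the space $\mathbb R[u]_{\leq N}$ of polynomials of degree at most $N$. Write the $\phi_m$, $0\leq m\leq N$, in the monomial basis $u^0,\dots,u^N$. By \eqref{eq:2.9} and the vanishing pattern of $D$ from the earlier proposition, the coefficient of $u^l$ in $\phi_m$ vanishes for $l>m$. So the transition matrix is the truncation $(D_{l,m})_{0\leq l,m\leq N}$ for the columns $m\geq h$, and the identity for the columns $m<h$. This matrix is upper triangular. Its diagonal entries are $1$ for $m<h$ and $D_{m,m}\neq0$ for $m\geq h$, so its determinant is nonzero and it is invertible. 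Hence $\phi_0,\dots,\phi_N$ are $N+1$ linearly independent vectors in an $(N+1)$-dimensional space, and they form a basis of $\mathbb R[u]_{\leq N}$.

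Finally, pass to all of $\mathbb R[u]$. Any polynomial lies in some $\mathbb R[u]_{\leq N}$, so it is a finite combination of the $\phi_m$; this gives spanning. Any finite linear relation among the $\phi_m$ involves indices at most some $N$, so it is trivial by the previous step; this gives linear independence. One could also argue independence directly: in a nontrivial relation, the term of largest index has a leading monomial that nothing else can cancel. There is no real obstacle here. The only point needing care is the initial block $m<h$, where $\phi_m$ is defined separately as a monomial rather than by \eqref{eq:2.9}. This is harmless, since those $\phi_m$ already have the correct degree.
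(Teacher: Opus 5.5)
Your proof is correct and follows the paper's argument: $\deg\phi_m=m$ for every $m$ (monomials for $m<h$, the preceding lemma for $m\geq h$), so the transition matrix from the monomial basis is upper triangular with nonzero diagonal on each finite truncation, which gives a basis of each $\mathbb R[u]_{\leq N}$ and hence of $\mathbb R[u]$. You simply spell out the passage from the finite-degree subspaces to all of $\mathbb R[u]$ (spanning and linear independence), which the paper leaves implicit.
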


\begin{proof}
For $0\leq m\leq h-1$ the functions $\phi_m$ are the monomials.
For $m\geq h$, the preceding lemma gives $\deg\phi_m=m$. Hence the
transition matrix from the monomial basis
\[
\{1,u,u^2,\ldots\}
\]
to $\{\phi_m\}_{m\geq0}$ is triangular with nonzero diagonal on every
finite principal truncation. Therefore it is invertible on every finite
degree subspace, and the family is a basis of $\mathbb R[u]$.
\end{proof}

\begin{remark}
If a resonant index occurs, the corresponding adapted polynomial may
have degree smaller than its index. The reduction at that index requires
a separate treatment; in particular, one should not divide by the
vanishing diagonal coefficient.
\end{remark}

\subsection{Negative powers}

For $n\leq-2$, differentiate
\[
u^{n+1}B(x)K(x).
\]
We obtain
\[
\frac{d}{dx}\left(u^{n+1}BK\right)
=
\left[
(n+1)u^nB+u^{n+1}W
\right]K.
\]

Define
\begin{equation}
\psi_n(u)
=
\sum_{l=n}^{n+h}
\left[
(n+1)c_{l-n}
+d_{l-n-1}
\right]u^l,
\qquad n\leq-2.
\label{eq:2.11}
\end{equation}
Then
\begin{equation}
\int\psi_n(u)K(x)\,dx
=
u^{n+1}B(x)K(x)+C.
\label{eq:2.12}
\end{equation}

\begin{prop}
Assume $B(p)\neq0$. The transition matrix from the standard Laurent
monomials to the family $\{\psi_n\}_{n\leq-2}$ is triangular and banded.
Its diagonal entry corresponding to $n$ is
\[
(n+1)B(p),
\]
and therefore is nonzero for every $n\leq-2.$
\end{prop}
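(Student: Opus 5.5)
The plan is to read off the transition matrix directly from the definition \eqref{eq:2.11}, just as the banded structure of $D$ was obtained from \eqref{eq:2.7}. For each $n\leq-2$, \eqref{eq:2.11} writes $\psi_n$ as a finite linear combination of the Laurent monomials $u^l$ with $n\leq l\leq n+h$. The coefficient of $u^l$ is
\[
T_{l,n}=(n+1)c_{l-n}+d_{l-n-1},
\]
with the convention, as before, that $c_j=d_j=0$ outside the ranges in \eqref{eq:2.3}. So the transition matrix is $T=(T_{l,n})$, and $T_{l,n}=0$ unless $n\leq l\leq n+h$. This is exactly the statement that $T$ is triangular (all entries with $l<n$ vanish) and banded with bandwidth $h$.

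For the diagonal entry, put $l=n$. Then $T_{n,n}=(n+1)c_0+d_{-1}=(n+1)c_0$, since $d_{-1}=0$. It remains to identify $c_0$. By \eqref{eq:2.3}, $c_0$ is the constant term of $B$ expanded in powers of $u=x-p$. Evaluating at $x=p$, i.e.\ $u=0$, gives $c_0=B(p)$, so $T_{n,n}=(n+1)B(p)$.

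For nonvanishing, note that $n\leq-2$ gives $n+1\leq-1$, so $n+1\neq0$. Together with the hypothesis $B(p)\neq0$, every diagonal entry is nonzero.

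The only point that needs care is the orientation of "triangular". Here the lowest power $u^n$ is the leading one, in contrast with the polynomial case, where the highest power $u^m$ led. I would state explicitly that, indexing by $n$ and ordering the monomials $u^n,u^{n+1},\dots$, each $\psi_n$ equals $T_{n,n}u^n$ plus higher powers up to $u^{n+h}$. This is where the assumption $B(p)\neq0$ replaces the nonresonance condition $D_{m,m}\neq0$ from the polynomial case. Nothing else is expected to be an obstacle; the argument is bookkeeping of indices in \eqref{eq:2.11}.
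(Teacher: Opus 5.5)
Your proposal is correct and follows the paper's argument: the paper also reads off the coefficient of the lowest power $u^n$ in \eqref{eq:2.11} as $(n+1)c_0=(n+1)B(p)$, and concludes it is nonzero from $n\leq-2$ and $B(p)\neq0$. You additionally spell out that the entries vanish outside $n\leq l\leq n+h$, and that $c_0=B(p)$ by evaluating at $u=0$; the paper leaves both of these implicit.
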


\begin{proof}
The coefficient of the lowest power $u^n$ in \eqref{eq:2.11} is
\[
(n+1)c_0=(n+1)B(p).
\]
Since $n\leq-2$ and $B(p)\neq0$, this coefficient is nonzero.
\end{proof}

\subsection{Finite-band reduction theorem}

\begin{thm}[Finite-band reduction]
Let $K$ satisfy \eqref{eq:2.1}, and let $p\in\mathbb R$ satisfy
$B(p)\neq0$. Suppose that
\[
D_{m,m}\neq0
\qquad\text{for all }m\geq h.
\]
Then every integral
\[
I_{n,p}=\int(x-p)^nK(x)\,dx,
\qquad n\in\mathbb Z,
\]
can be reduced to the finite family
\begin{equation}
\mathcal B_p=
\left\{
I_{-1,p},I_0,\ldots,I_{h-1,p}
\right\}
\label{eq:2.13}
\end{equation}
and explicit boundary terms.
\end{thm}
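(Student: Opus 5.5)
We argue by induction in two directions starting from the finite window $\mathcal B_p$: upward for $n\geq h$ and downward for $n\leq-2$. Both directions use only the recurrence \eqref{eq:2.5}, in the band forms \eqref{eq:2.6}/\eqref{eq:2.10} and \eqref{eq:2.12}.

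\emph{Upward step.} Let $m\geq h$, and assume every $I_{l,p}$ with $0\leq l\leq m-1$ has already been written as a linear combination of $\mathcal B_p$ plus explicit boundary terms of the form (polynomial in $u$)$\cdot B(x)K(x)$. For $m\leq h-1$ this holds trivially, since those integrals belong to $\mathcal B_p$. By \eqref{eq:2.6},
\[
D_{m,m}I_{m,p}
=
u^{m-h+1}B(x)K(x)-\sum_{l=m-h}^{m-1}D_{l,m}I_{l,p}+C .
\]
Since $m\geq h$, every index satisfies $l\geq m-h\geq0$, so each term on the right is already reduced. The hypothesis $D_{m,m}\neq0$ lets us divide, which gives $I_{m,p}$ in the required form. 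Equivalently, we invert the finite upper triangular truncation of $D$ in \eqref{eq:2.6}, or use that $\{\phi_m\}$ is a basis (the Proposition above) together with \eqref{eq:2.10}. For a fixed target $I_{N,p}$, only the indices $m=h,\dots,N$ enter, so the reduction is finite.

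\emph{Downward step.} Let $n\leq-2$. By \eqref{eq:2.11} and \eqref{eq:2.12},
\[
(n+1)c_0\,I_{n,p}
=
u^{n+1}B(x)K(x)-\sum_{l=n+1}^{n+h}\bigl[(n+1)c_{l-n}+d_{l-n-1}\bigr]I_{l,p}+C .
\]
Here $(n+1)c_0=(n+1)B(p)\neq0$ by the Proposition on negative powers. The indices on the right run over $n+1,\dots,n+h$, so they are all strictly larger than $n$. A descending induction on $n$, starting at $n=-2$, therefore expresses each $I_{n,p}$ through integrals with larger indices. Those with indices $\geq-1$ are either in $\mathcal B_p$ or have already been reduced by the upward step. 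The positive indices reached, at most $n+h\leq h-2$, lie inside the basic window, so for negative $n$ the downward step never needs the upward step at all.

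\emph{Bookkeeping.} We also have to check that $I_{-1,p}$ is genuinely unreachable. The relation \eqref{eq:2.5} at $n=0$ carries the factor $n$ on $I_{-1,p}$, and at $n=-1$ the relation has been shifted so that the lowest term is $u^{-1}$ with coefficient $0\cdot c_0$. For this reason $I_{-1,p}$ must be kept in $\mathcal B_p$; this is the only place where some care is needed. The remaining point is to confirm that the boundary terms are explicit, namely finite sums of Laurent monomials in $u$ times $B(x)K(x)$. This follows from the induction, since each step adds exactly one term $u^{k}BK$, scaled by a known rational coefficient.
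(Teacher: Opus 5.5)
Your proof is correct and follows the same route as the paper's: an upward triangular recursion through \eqref{eq:2.6}/\eqref{eq:2.10} that uses $D_{m,m}\neq0$ for $m\geq h$, and a downward recursion through \eqref{eq:2.12} that uses the nonzero lowest coefficient $(n+1)B(p)$ for $n\leq-2$; your observation that the negative sector only reaches nonnegative indices up to $h-2$ spells out the paper's ``combining the two sectors'' step. The closing ``bookkeeping'' remark that $I_{-1,p}$ is ``genuinely unreachable'' is not needed, since the theorem asserts only reducibility to $\mathcal B_p$ and not minimality, and your argument there only shows that $I_{-1,p}$ is never the pivot of this particular triangular scheme.
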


\begin{proof}
For $n\leq-2$, the triangular band system associated with
\eqref{eq:2.12} has nonzero diagonal and reduces every negative-index
integral to $I_{-1,p}$ and finitely many nonnegative-index integrals.

For $n\geq0$, the adapted polynomial basis above gives a triangular
band system whose diagonal entries are nonzero by assumption. Hence
every $I_{n,p}$ with $n\geq h$ reduces recursively to
\[
I_0,\ldots,I_{h-1}
\]
and the boundary terms in \eqref{eq:2.10}.
Combining the two sectors gives the stated reduction.
\end{proof}

\subsection{Finite computation of reduction coefficients}

For a fixed target index $M$, let
\[
\mathcal D^{[M]}
=
(D_{l,m})_{0\leq l,m\leq M}
\]
be the finite principal truncation of $D$. Since $\mathcal D^{[M]}$ is
upper triangular,
\[
\det\mathcal D^{[M]}
=
\prod_{m=0}^{M}D_{m,m}.
\]
Consequently, $\mathcal D^{[M]}$ is nonsingular if and only if all its
diagonal entries are nonzero.

Only the column corresponding to the target integral is required. If
$v^{(M)}$ denotes the corresponding column of
\[
\left(\mathcal D^{[M]}\right)^{-1},
\]
then
\[
\mathcal D^{[M]}v^{(M)}=e_M.
\]
The triangular and banded structure gives
\begin{equation}
v^{(M)}_M=\frac{1}{D_{M,M}},
\label{eq:2.14}
\end{equation}
and, proceeding backwards,
\begin{equation}
v^{(M)}_i
=
-\frac{1}{D_{i,i}}
\sum_{j=i+1}^{\min(i+h,M)}
D_{i,j}v^{(M)}_j.
\label{eq:2.15}
\end{equation}

Thus no inverse of an infinite matrix is required. For fixed bandwidth,
the number of arithmetic operations required for one reduction column
is linear in the size of the truncation, apart from the cost of
arithmetic in the coefficient field.

\subsection{Example 1: a hyperexponential weight}

Consider
\begin{equation}
K(x)=e^x(x^2+1)^\alpha,
\qquad \alpha\in\mathbb R.
\label{eq:2.16}
\end{equation}
Then
\[
\frac{K'(x)}{K(x)}
=
1+\frac{2\alpha x}{x^2+1}
=
\frac{x^2+2\alpha x+1}{x^2+1}.
\]
Thus
\[
A(x)=x^2+2\alpha x+1,
\qquad
B(x)=x^2+1,
\]
and
\[
W(x)=B'(x)+A(x)
=
x^2+2(\alpha+1)x+1.
\]
For $p=0$ the fundamental identity gives
\[
\frac{d}{dx}
\left(x^n(x^2+1)K(x)\right)
=
\left[
nx^{n-1}
+x^n
+(n+2\alpha+2)x^{n+1}
+x^{n+2}
\right]K(x).
\]
Hence
\begin{equation}
nI_{n-1}+I_n+(n+2\alpha+2)I_{n+1}+I_{n+2}
=
x^n(x^2+1)K(x)+C.
\label{eq:2.17}
\end{equation}
Here $h=3$ and the diagonal coefficient is
\[
D_{m,m}=1.
\]
Thus the positive-index sector is nonresonant and every moment reduces
to the basic family
\[
I_{-1},\ I_0,\ I_1,\ I_2.
\]

\begin{remark}
This example illustrates an important feature of the reduction
procedure. The recurrence expresses an infinite family of integrals in
terms of a finite set of basic integrals and explicit boundary terms.
The basic integrals need not themselves be elementary. If they are
elementary, then the recurrence produces elementary antiderivatives for
the whole family. If some of them are non-elementary, the same
recurrence still reduces every integral in the family to a fixed finite
collection of non-elementary basic integrals. Thus finite-dimensional
reduction and elementary integrability are separate issues: the former
may hold even when the latter fails.
\end{remark}

\subsection{Example 2: a shorter recurrence}

Consider
\begin{equation}
K(x)=e^{-x^2}(x^2+1)^\alpha,
\qquad \alpha\in\mathbb R.
\label{eq:2.18}
\end{equation}
Then
\[
\frac{K'(x)}{K(x)}
=
\frac{-2x^3+2(\alpha-1)x}{x^2+1},
\]
so that
\[
A(x)=-2x^3+2(\alpha-1)x,
\qquad
B(x)=x^2+1,
\]
and
\[
W(x)=-2x^3+2\alpha x.
\]
Consequently,
\begin{equation}
nI_{n-1}
+
(n+2\alpha)I_{n+1}
-
2I_{n+3}
=
x^n(x^2+1)K(x)+C.
\label{eq:2.19}
\end{equation}
Only one parity class occurs in each recurrence. This illustrates that
the actual recurrence may possess additional structure beyond the
formal band bound of the general construction.

\section{Generalized real Schwarz--Christoffel integrals}

We now apply the first-order reduction scheme of Section~2 to a class
of generalized real Schwarz--Christoffel integrals. This provides an
explicit and important specialization of the general construction.

Let
\begin{equation}
K(x)=
\prod_{i=1}^{N_1}(x-x_i)^{\alpha_i}
\prod_{i=N_1+1}^{N}(1-x_i x)^{\beta_i},
\qquad
\alpha_i,\beta_i\in\mathbb R,
\qquad
|\alpha_i|<1,\quad |\beta_i|<1,
\label{eq:3.1}
\end{equation}
and let the interval under consideration be chosen so that $K$ is real
and nonzero. We consider integrals
\begin{equation}
\int R(x)K(x)\,dx,
\label{eq:3.2}
\end{equation}
where $R(x)$ is rational. By polynomial division and partial fraction
decomposition, it is enough to consider the positive-index family
\begin{equation}
I_n(x)=\int x^nK(x)\,dx,
\qquad n\geq0,
\label{eq:3.3}
\end{equation}
and the negative-index families
\begin{equation}
I_{n,p}(x)=\int (x-p)^nK(x)\,dx,
\qquad n<0,
\label{eq:3.4}
\end{equation}
where $p$ is chosen away from the zeros of the polynomial factor
introduced below.

The connection with the classical Schwarz--Christoffel and Lauricella
settings is described in
\cite{driscoll,trefethen,lauricella}.

\subsection{The polynomial factor}

Set
\begin{equation}
P(x)=
\prod_{i=1}^{N_1}(x-x_i)
\prod_{i=N_1+1}^{N}(1-x_i x).
\label{eq:3.5}
\end{equation}
Then
\[
K_1(x)=P(x)K(x)
\]
and, provided
\[
\lambda:=(-1)^{N-N_1}
\prod_{i=N_1+1}^{N}x_i\neq0,
\]
we have
\[
\deg P=N.
\]

Moreover,
\begin{equation}
\frac{K'(x)}{K(x)}
=
\sum_{i=1}^{N_1}\frac{\alpha_i}{x-x_i}
-
\sum_{i=N_1+1}^{N}
\frac{\beta_i x_i}{1-x_i x}.
\label{eq:3.6}
\end{equation}
Thus, in the notation of Section~2, we may take
\begin{equation}
B(x)=P(x),
\qquad
A(x)=P(x)\frac{K'(x)}{K(x)}.
\label{eq:3.7}
\end{equation}
Explicitly,
\begin{equation}
A(x)=
\sum_{i=1}^{N_1}
\alpha_i\frac{P(x)}{x-x_i}
-
\sum_{i=N_1+1}^{N}
\beta_i x_i\frac{P(x)}{1-x_i x}.
\label{eq:3.8}
\end{equation}
Hence $A(x)$ is a polynomial.

The polynomial occurring in the differential identity is
\begin{equation}
W(x)=B'(x)+A(x)
=
P'(x)+P(x)\frac{K'(x)}{K(x)}.
\label{eq:3.9}
\end{equation}
Equivalently,
\begin{equation}
W(x)=
\left[
\sum_{i=1}^{N_1}
\frac{\alpha_i+1}{x-x_i}
-
\sum_{i=N_1+1}^{N}
\frac{x_i(\beta_i+1)}{1-x_i x}
\right]P(x),
\label{eq:3.10}
\end{equation}
and therefore $W(x)$ is a polynomial of degree at most $N-1$.

Put
\[
\lambda=(-1)^{N-N_1}
\prod_{i=N_1+1}^{N}x_i,
\]
which is the leading coefficient $\operatorname{lc}(P)$ of $P$.
Then
\begin{equation}
\operatorname{lc}(W)
=
\lambda
\left(
N+\sum_{i=1}^{N_1}\alpha_i
+\sum_{i=N_1+1}^{N}\beta_i
\right).
\label{eq:3.11}
\end{equation}

\subsection{The fundamental identity}

Fix $p\in\mathbb R$ such that
\[
P(p)\neq0
\]
and put
\[
u=x-p.
\]
The fundamental identity of Section~2 becomes
\begin{equation}
\frac{d}{dx}\left(u^nP(x)K(x)\right)
=
\left[
nu^{n-1}P(x)+u^nW(x)
\right]K(x).
\label{eq:3.12}
\end{equation}

Write
\[
P(x)=\sum_{j=0}^{N}a_j u^j,
\qquad
W(x)=\sum_{j=0}^{N-1}b_j u^j.
\label{eq:3.13}
\]
Then
\[
a_N=\lambda,
\qquad
b_{N-1}
=
\lambda
\left(
N+\sum_{i=1}^{N_1}\alpha_i
+\sum_{i=N_1+1}^{N}\beta_i
\right).
\label{eq:3.14}
\]
Consequently, in the notation of Section~2,
\[
h=N.
\]

\subsection{Positive powers}

For the positive-index sector we put $p=0$, so that $u=x$, and write
\[
I_n=\int x^nK(x)\,dx.
\]
For every $m\geq N$, define
\begin{equation}
\phi_m(x)
=
\sum_{l=m-N}^{m}
\left[
(m-N+1)a_{l-m+N}
+
b_{l-m+N-1}
\right]x^l.
\label{eq:3.15}
\end{equation}
For the initial indices
\[
0\leq m\leq N-1,
\]
put
\[
\phi_m(x)=x^m.
\]

By the differential identity,
\begin{equation}
\int\phi_m(x)K(x)\,dx
=
x^{m-N+1}P(x)K(x)+C,
\qquad m\geq N.
\label{eq:3.16}
\end{equation}

The coefficient of $x^m$ in $\phi_m$ is
\begin{equation}
\delta_m
=
(m-N+1)a_N+b_{N-1}.
\label{eq:3.17}
\end{equation}
Using \eqref{eq:3.14}, this becomes
\begin{equation}
\delta_m
=
\lambda
\left[
m+1+
\sum_{i=1}^{N_1}\alpha_i+
\sum_{i=N_1+1}^{N}\beta_i
\right].
\label{eq:3.18}
\end{equation}

Thus an index $m\geq N$ is nonresonant whenever
\begin{equation}
m+1+
\sum_{i=1}^{N_1}\alpha_i+
\sum_{i=N_1+1}^{N}\beta_i
\neq0.
\label{eq:3.19}
\end{equation}
Since the left-hand side is affine in $m$, there is at most one
resonant index.

\begin{prop}
Assume that \eqref{eq:3.19} holds for every $m\geq N$. Then
\[
\{\phi_m\}_{m\geq0}
\]
is a basis of $\mathbb R[x]$, and every positive-index integral
$I_m$, $m\geq N$, is reducible to
\[
I_0,\ldots,I_{N-1}
\]
and explicit boundary terms.
\end{prop}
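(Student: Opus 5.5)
The plan is to derive this statement as a direct specialization of the general results of Section~2, taking $p=0$, $B=P$, $A=PK'/K$ and $W=P'+A$ as in \eqref{eq:3.7} and \eqref{eq:3.9}. The only genuinely new input is the computation of the band parameter and of the diagonal coefficient, and both are already recorded in \eqref{eq:3.14} and \eqref{eq:3.18}.

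First, I check that $h=N$ in the notation of \eqref{eq:2.4}. Since $\lambda\neq0$, we have $\deg P=N$. By \eqref{eq:3.10}, $\deg W\leq N-1$. Hence $h=\max\{N,(N-1)+1\}=N$, and this holds even when $b_{N-1}=0$. The adapted polynomials \eqref{eq:3.15} therefore coincide with those of \eqref{eq:2.9}, with $D_{l,m}=(m-N+1)a_{l-m+N}+b_{l-m+N-1}$. In particular the diagonal entry $D_{m,m}$ equals $\delta_m$ of \eqref{eq:3.17}. Substituting $a_N=\lambda$ and the value of $b_{N-1}$ from \eqref{eq:3.14} gives \eqref{eq:3.18}. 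Since $\lambda\neq0$, hypothesis \eqref{eq:3.19} for all $m\geq N$ is equivalent to $D_{m,m}\neq0$ for all $m\geq h$.

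With this in hand, the basis assertion follows from the Section~2 proposition on the adapted polynomial basis. Concretely, $\deg\phi_m=m$ for every $m\geq0$: for $m<N$ this holds because $\phi_m$ is a monomial, and for $m\geq N$ it follows from the degree lemma. So the transition matrix to monomials is triangular with nonzero diagonal on each truncation.

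For the reduction, I argue by strong induction on $m\geq N$. By \eqref{eq:3.16},
\[
\delta_m I_m+\sum_{l=m-N}^{m-1}D_{l,m}I_l=x^{m-N+1}P(x)K(x)+C .
\]
Dividing by $\delta_m\neq0$ expresses $I_m$ through $I_{m-N},\dots,I_{m-1}$ and a boundary term. The lowest index $m-N$ is nonnegative, so no negative moments enter. By the induction hypothesis each of these lower moments is already reduced to $I_0,\dots,I_{N-1}$ plus boundary terms. Equivalently, one may invoke the positive sector of the finite-band reduction theorem, or the backward substitution \eqref{eq:2.14}--\eqref{eq:2.15}.

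The main obstacle is modest: verifying the leading coefficient of $W$ in \eqref{eq:3.11} and \eqref{eq:3.14}. I would expand \eqref{eq:3.10} term by term. Each term $(\alpha_i+1)P/(x-x_i)$ has leading coefficient $\lambda(\alpha_i+1)$. For the other factors, $-x_i(\beta_i+1)P/(1-x_ix)$ has leading coefficient $-x_i(\beta_i+1)\cdot\lambda/(-x_i)=\lambda(\beta_i+1)$. Summing over all $N$ factors yields $\lambda\bigl(N+\sum\alpha_i+\sum\beta_i\bigr)$, and this is the only point where care is needed.
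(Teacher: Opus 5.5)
Your proposal is correct and follows essentially the paper's route: \eqref{eq:3.19}, together with the standing assumption $\lambda\neq0$, gives $\deg\phi_m=m$, so the transition matrix to monomials is triangular with nonzero diagonal, and the reduction comes from solving \eqref{eq:3.16} for the top moment. Your version only makes explicit three steps the paper's short proof leaves implicit: the check that $h=N$, the identification $D_{m,m}=\delta_m$, and the strong induction.
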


\begin{proof}
For $0\leq m\leq N-1$, the functions $\phi_m$ are the monomials.
For $m\geq N$, condition \eqref{eq:3.19} gives
\[
\deg\phi_m=m.
\]
Hence the transition matrix from the monomial basis to the adapted
basis is triangular with nonzero diagonal on every finite principal
truncation. The reduction follows from \eqref{eq:3.16}.
\end{proof}

\subsection{The transition matrix}

Let
\[
\mathcal A=(\mathcal A_{l,m})_{l,m\geq0}
\]
denote the infinite transition matrix from the monomial basis
\[
\gamma_m(x)=x^m
\]
to the adapted basis
\[
\{\phi_m(x)\}_{m\geq0}.
\]
Then
\begin{equation}
\mathcal A_{l,m}
=
\begin{cases}
\delta_{l,m},
&0\leq m\leq N-1,\\[1mm]
0,
&l>m\ \text{or}\ l<m-N,\\[1mm]
(m-N+1)a_{l-m+N}
+b_{l-m+N-1},
&m\geq N.
\end{cases}
\label{eq:3.20}
\end{equation}
Thus $\mathcal A$ is upper triangular and banded.

For a fixed $M\geq0$, let
\[
\mathcal A^{[M]}
=
(\mathcal A_{l,m})_{0\leq l,m\leq M}
\]
be its finite principal truncation. Since $\mathcal A^{[M]}$ is upper
triangular,
\[
\det\mathcal A^{[M]}
=
\prod_{m=0}^{M}\mathcal A_{m,m}.
\]
Consequently,
\[
\mathcal A^{[M]}
\text{ is nonsingular}
\iff
\mathcal A_{m,m}\neq0
\quad\text{for all }0\leq m\leq M.
\]

For $M\geq N$, the finite truncation has the block form
\[
\mathcal A^{[M]}
=
\left(
\begin{array}{cc}
I & E^{[M]}\\[2mm]
0 & D^{[M]}
\end{array}
\right),
\]
where $D^{[M]}$ is upper triangular and banded. Hence
\begin{equation}
\left(\mathcal A^{[M]}\right)^{-1}
=
\left(
\begin{array}{cc}
I &
-\,E^{[M]}\left(D^{[M]}\right)^{-1}\\[2mm]
0 &
\left(D^{[M]}\right)^{-1}
\end{array}
\right).
\label{eq:3.21}
\end{equation}
For the reduction of a fixed target integral, only the corresponding
column of $\left(\mathcal A^{[M]}\right)^{-1}$ is required.

\subsection{Negative powers}

Let $p\in\mathbb R$ satisfy
\[
P(p)\neq0.
\]
For $n\leq-2$, define
\begin{equation}
\psi_n(u)
=
\sum_{l=n}^{n+N}
\left[
(n+1)a_{l-n}
+
b_{l-n-1}
\right]u^l.
\label{eq:3.22}
\end{equation}
Then
\begin{equation}
\int\psi_n(u)K(x)\,dx
=
u^{n+1}P(x)K(x)+C.
\label{eq:3.23}
\end{equation}

The coefficient of the lowest power $u^n$ is
\[
(n+1)a_0=(n+1)P(p),
\]
and is therefore nonzero for every $n\leq-2$. Hence the corresponding
transition matrix is triangular and banded with nonzero diagonal.

Consequently, all negative-index integrals reduce to
\[
I_{-1,p}
\]
and finitely many nonnegative-index integrals. In the nonresonant case,
the natural basic family is
\begin{equation}
\mathcal B_p=
\left\{
I_{-1,p},I_0,\ldots,I_{N-1,p}
\right\}.
\label{eq:3.24}
\end{equation}

\subsection{Explicit example}

Consider
\begin{equation}
K(x)=
(x-1)^{-1/3}(x+1)^{-2/3}
(x-2)^{-2/5}(x+3)^{-3/4}(x-4)^{-1/3}.
\label{eq:3.25}
\end{equation}
Then $N=5$ and
\[
K_1(x)=
(x-1)^{2/3}(x+1)^{1/3}
(x-2)^{3/5}(x+3)^{1/4}(x-4)^{2/3}.
\]

For $n=4$ the construction gives
\begin{align}
\int x^4K(x)\,dx
={}&
-\frac{1032}{151}\int K(x)\,dx
-\frac{1326}{151}\int xK(x)\,dx
+\frac{1303}{151}\int x^2K(x)\,dx
\nonumber\\
&+
\frac{246}{151}\int x^3K(x)\,dx
+\frac{60}{151}K_1(x)+C.
\label{eq:3.26}
\end{align}

For $n=5$,
\begin{align}
\int x^5K(x)\,dx
={}&
-\frac{222192}{31861}\int K(x)\,dx
-\frac{811308}{31861}\int xK(x)\,dx
+\frac{110232}{31861}\int x^2K(x)\,dx
\nonumber\\
&+
\frac{401209}{31861}\int x^3K(x)\,dx
+\frac{25560}{31861}K_1(x)
+\frac{60}{211}xK_1(x)+C.
\label{eq:3.27}
\end{align}

For the negative-index sector, choose $p=-2$. Then
\begin{align}
\int (x+2)^{-2}K(x)\,dx
={}&
\frac{91}{4320}\int(x+2)^3K(x)\,dx
-\frac{337}{2160}\int(x+2)^2K(x)\,dx
\nonumber\\
&+
\frac{617}{4320}\int(x+2)K(x)\,dx
+\frac{1147}{2160}\int K(x)\,dx
\nonumber\\
&+
\frac{11}{60}\int(x+2)^{-1}K(x)\,dx
-\frac1{72}(x+2)^{-2}K_1(x)+C.
\label{eq:3.28}
\end{align}

After expanding
\[
(x+2)^3=x^3+6x^2+12x+8,
\qquad
(x+2)^2=x^2+4x+4,
\]
and collecting terms, we obtain
\begin{align}
\int (x+2)^{-2}K(x)\,dx
={}&
\frac{91}{4320}\int x^3K(x)\,dx
-\frac4{135}\int x^2K(x)\,dx
-\frac{329}{1440}\int xK(x)\,dx
\nonumber\\
&+
\frac{13}{36}\int K(x)\,dx
+\frac{11}{60}\int(x+2)^{-1}K(x)\,dx
-\frac1{72}(x+2)^{-2}K_1(x)+C.
\label{eq:3.29}
\end{align}

\section{Algebraic logarithmic derivatives}

We now extend the scalar reduction scheme of Section~2 to the case in
which the logarithmic derivative of the function $K$ is algebraic over
$\mathbb R(x)$. The point of passing to an algebraic extension is that
an algebraic logarithmic derivative can be represented in a
finite-dimensional vector space over $\mathbb R(x)$. The scalar moment
relations are then replaced naturally by vector relations and
block-band matrices.

Let
\[
L/\mathbb R(x)
\]
be a finite algebraic extension of degree
\[
d=[L:\mathbb R(x)]<\infty,
\]
and assume that $K$ is a nonzero differentiable function on an interval
$J\subset\mathbb R$ such that
\begin{equation}
\rho(x):=\frac{K'(x)}{K(x)}\in L.
\label{eq:4.1}
\end{equation}

Choose a basis
\[
v_1,\ldots,v_d
\]
of $L$ over $\mathbb R(x)$ and put
\[
\mathbf v(x)=
\left(
\begin{array}{c}
v_1(x)\\
\vdots\\
v_d(x)
\end{array}
\right).
\]

Since the derivation \(d/dx\) on \(\mathbb R(x)\) extends uniquely to the finite algebraic extension \(L\), we have
\[
\frac{d}{dx}L\subseteq L.
\]
Consequently, for the chosen basis \(v_1,\ldots,v_d\) of \(L\) over \(\mathbb R(x)\), there exists a matrix
\[
M(x)\in M_d(\mathbb R(x))
\]
such that
\begin{equation}
\mathbf v'(x)=M(x)\mathbf v(x).
\label{eq:4.2}
\end{equation}
Multiplication by $\rho$ is an $\mathbb R(x)$-linear endomorphism of
$L$. Hence there exists a matrix
\[
R(x)\in M_d(\mathbb R(x))
\]
such that
\begin{equation}
\rho(x)\mathbf v(x)=R(x)\mathbf v(x).
\label{eq:4.3}
\end{equation}
Therefore
\begin{equation}
\frac{d}{dx}\bigl(\mathbf v(x)K(x)\bigr)
=
H(x)\mathbf v(x)K(x),
\qquad
H(x)=M(x)+R(x).
\label{eq:4.4}
\end{equation}

Choose a nonzero polynomial
\[
q(x)\in\mathbb R[x]
\]
which clears the denominators of the entries of $H(x)$, and set
\[
G(x)=q(x)H(x)\in M_d(\mathbb R[x]),
\qquad
Q(x)=q'(x)I_d+G(x).
\]

\begin{prop}
For every $n\geq0$,
\begin{equation}
\frac{d}{dx}
\left(
x^nq(x)\mathbf v(x)K(x)
\right)
=
\left[
nx^{n-1}q(x)I_d+x^nQ(x)
\right]\mathbf v(x)K(x).
\label{eq:4.5}
\end{equation}
\end{prop}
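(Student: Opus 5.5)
The identity is a vector analogue of the fundamental identity \eqref{eq:2.2} of Section~2, so I will prove it the same way: apply the product rule to $x^nq(x)\cdot(\mathbf v(x)K(x))$ and then use \eqref{eq:4.4} to rewrite the derivative of the vector factor.

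The first step is the Leibniz rule for a scalar function times a vector-valued function, applied componentwise:
\[
\frac{d}{dx}\left(x^nq\,\mathbf vK\right)
=
nx^{n-1}q\,\mathbf vK+x^nq'\,\mathbf vK+x^nq\,\frac{d}{dx}\bigl(\mathbf vK\bigr).
\]
For $n=0$ the first term is absent, which is consistent with the coefficient $n$ in \eqref{eq:4.5}.

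The second step substitutes \eqref{eq:4.4}, namely $\frac{d}{dx}(\mathbf vK)=H\mathbf vK$, into the last term. This gives $x^nqH\mathbf vK=x^nG\mathbf vK$, since $G=qH$. Collecting terms, the right-hand side becomes
\[
\left[nx^{n-1}qI_d+x^n\bigl(q'I_d+G\bigr)\right]\mathbf vK,
\]
and this is exactly \eqref{eq:4.5} because $Q=q'I_d+G$.

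Most of the argument is routine. The only point needing care is justifying \eqref{eq:4.4} as an identity of actual functions on $J$, not just of elements of the abstract field $L$. The $v_i$ must be realized as differentiable functions on $J$, and $\rho$ must coincide there with $K'/K$, so that the formal derivation on $L$ agrees with ordinary differentiation. The paper takes this as part of the setup: it uses the unique extension of the derivation to $L$ and identifies $\mathbf v'=M\mathbf v$.

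To be safe, I would shrink $J$ if necessary so that it avoids the poles of the entries of $M$, $R$ and of the $v_i$. Then every function appearing in \eqref{eq:4.5} is differentiable on $J$. Since \eqref{eq:4.4} is stated earlier, I may invoke it directly, and the proof reduces to the two steps above.
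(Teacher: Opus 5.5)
Your proof is correct and matches the paper's argument, which is the product rule applied to $x^nq\cdot(\mathbf vK)$ followed by substituting \eqref{eq:4.4} and using $G=qH$ and $Q=q'I_d+G$. Your remark about realizing the $v_i$ as differentiable functions on $J$, so that the derivation on $L$ agrees with ordinary differentiation, is a reasonable caveat that the paper leaves implicit in its setup.
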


\begin{proof}
The identity follows immediately from the product rule and
\eqref{eq:4.4}.
\end{proof}

Write
\[
q(x)=\sum_{j=0}^{b}q_jx^j,
\qquad
Q(x)=\sum_{j=0}^{w}Q_jx^j,
\]
where $Q_j\in M_d(\mathbb R)$, and define
\[
\mathbf I_n(x)
=
\int x^n\mathbf v(x)K(x)\,dx.
\]

\begin{thm}[Block-band reduction]
Under the assumptions above, the vector moments satisfy the finite
block recurrence
\begin{equation}
\sum_{j=0}^{b}
nq_j\mathbf I_{n+j-1}
+
\sum_{j=0}^{w}
Q_j\mathbf I_{n+j}
=
x^nq(x)\mathbf v(x)K(x)+\mathbf C.
\label{eq:4.6}
\end{equation}
Consequently, after ordering the relations according to their largest
moment index, one obtains a finite block-band system with blocks of
size $d\times d$.
\end{thm}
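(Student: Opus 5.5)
The recurrence follows directly from the vector differential identity in the preceding proposition, exactly as the scalar recurrence of Section~2 follows from the scalar identity. I would substitute the expansions $q(x)=\sum_{j=0}^{b}q_jx^j$ and $Q(x)=\sum_{j=0}^{w}Q_jx^j$ into the right-hand side of \eqref{eq:4.5}. Since the $q_j$ are scalars and the $Q_j\in M_d(\mathbb R)$ are constant matrices, this gives
\[
\frac{d}{dx}\bigl(x^nq(x)\mathbf v(x)K(x)\bigr)
=
\sum_{j=0}^{b}nq_jx^{n+j-1}\mathbf v(x)K(x)
+\sum_{j=0}^{w}Q_jx^{n+j}\mathbf v(x)K(x).
\]
Integrating componentwise, and pulling the constant matrices $Q_j$ out of the integral by linearity, yields \eqref{eq:4.6}. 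Here $\mathbf C$ is a constant vector. For $n=0$ the first sum vanishes, so the index $-1$ never enters and the statement is consistent for all $n\geq0$.

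For the block-band structure, I would mirror the reindexing of \S2.1. Put $h=\max\{b,w+1\}$ and $m=n+h-1$. The moment indices occurring in \eqref{eq:4.6} then lie in $\{n-1,\ldots,n+\max(b-1,w)\}\subseteq\{m-h,\ldots,m\}$. Relation \eqref{eq:4.6} can thus be written as
\[
\sum_{l=m-h}^{m}\mathcal D_{l,m}\mathbf I_l = x^{m-h+1}q\mathbf vK+\mathbf C,
\]
with $d\times d$ blocks
\[
\mathcal D_{l,m}=(m-h+1)q_{l-m+h}I_d+Q_{l-m+h-1},
\]
where coefficients are zero outside their ranges. The block matrix $(\mathcal D_{l,m})$ is then block upper triangular with block bandwidth $h$. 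Its diagonal block is $\mathcal D_{m,m}=(m-h+1)q_hI_d+Q_{h-1}$, which I would record in analogy with \eqref{eq:2.8}.

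I expect no substantial obstacle, since the statement only asserts the recurrence and the band shape. The only delicate point is bookkeeping. The moments are vectors and the coefficients are matrices acting on the left, so I must check that the matrix $Q(x)$ multiplies $\mathbf v$ on the left in \eqref{eq:4.5}, that this order is preserved through integration, and that the index bounds are taken correctly when $b$ and $w+1$ differ. Invertibility of the diagonal blocks (the block analogue of nonresonance) is not claimed, so I would leave it as a remark for later.
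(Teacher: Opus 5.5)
Your proposal is correct and follows the paper's proof, which likewise expands \eqref{eq:4.5} using the coefficient expansions of $q$ and $Q$ and integrates termwise. Your reindexing with $h=\max\{b,w+1\}$, $m=n+h-1$ and the blocks $\mathcal D_{l,m}=(m-h+1)q_{l-m+h}I_d+Q_{l-m+h-1}$ is a more explicit version of the paper's remark that only finitely many shifts occur and that the dependence on $n$ is affine.
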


\begin{proof}
Expanding \eqref{eq:4.5} and integrating with respect to $x$ gives
\eqref{eq:4.6}. Only finitely many shifts occur, so the resulting
system is block-banded. The dependence on $n$ is affine.
\end{proof}

Let $\mathcal D_m$ denote the diagonal block corresponding to the
highest-index vector $\mathbf I_m$ in the triangularly ordered system.

\begin{prop}
Suppose that
\[
\det\mathcal D_m
\]
is not identically zero as a polynomial in $m$. Then only finitely many
indices are resonant, and for every nonresonant index the corresponding
relation can be solved for $\mathbf I_m$.
\end{prop}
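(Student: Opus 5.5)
Set $m=n+h-1$ with $h=\max\{b,w+1\}$, exactly as in Section~2, so that \eqref{eq:4.6} is written as a relation among $\mathbf I_{m-h},\ldots,\mathbf I_m$. The coefficient of the top vector $\mathbf I_m$ collects the contribution $nq_h$ (from $j=h$ in the first sum, present only if $b=h$) and $Q_{h-1}$ (from $j=h-1$ in the second sum, present only if $w=h-1$). This gives
\[
\mathcal D_m=(m-h+1)q_hI_d+Q_{h-1},
\]
where $q_h=0$ if $b<h$ and $Q_{h-1}=0$ if $w<h-1$. The first step is to record this explicit formula. It shows that each entry of $\mathcal D_m$ is an affine function of $m$ with real matrix coefficients. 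This is the block analogue of \eqref{eq:2.8}.

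Next, $\det\mathcal D_m$ is a polynomial in $m$ of degree at most $d$, since it is the determinant of a $d\times d$ matrix with affine entries. Moreover, if $q_h\neq0$ it equals $q_h^d\det\bigl((m-h+1)I_d+q_h^{-1}Q_{h-1}\bigr)$, a characteristic polynomial. By hypothesis this polynomial is not identically zero, so it has at most $d$ real roots. Hence at most $d$ admissible indices $m$ satisfy $\det\mathcal D_m=0$, that is, are resonant. This replaces the single resonant index of Remark following Definition in Section~2 by at most $d$ resonant indices.

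Finally, for a nonresonant $m$ the matrix $\mathcal D_m\in M_d(\mathbb R)$ is invertible. Multiplying the relation on the left by $\mathcal D_m^{-1}$ expresses $\mathbf I_m$ as a combination of $\mathbf I_{m-h},\ldots,\mathbf I_{m-1}$, with matrix coefficients $\mathcal D_m^{-1}(\cdots)$, plus the boundary term $\mathcal D_m^{-1}x^{m-h+1}q(x)\mathbf v(x)K(x)$.

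The only point requiring care is identifying the diagonal block correctly when $b$ and $w+1$ differ. This is just the bookkeeping of which sum reaches the top index, as in \eqref{eq:2.7}. Noncommutativity causes no difficulty, because $q_h$ is a scalar and the dependence on $m$ is scalar affine.
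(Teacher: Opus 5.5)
Your argument is correct and is essentially the paper's: a nonzero polynomial $\det\mathcal D_m$ has finitely many zeros, and at every other index the diagonal block is invertible, so the block row can be solved for $\mathbf I_m$. Your explicit formula $\mathcal D_m=(m-h+1)q_hI_d+Q_{h-1}$ is a useful addition. It justifies the paper's unargued claim that $\det\mathcal D_m$ is a polynomial in $m$, and it sharpens ``finitely many'' to at most $d$ resonant indices.
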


\begin{proof}
The determinant $\det\mathcal D_m$ is a nonzero polynomial in $m$ and
therefore has only finitely many zeros. At every nonresonant index the
diagonal block is invertible, which allows the recurrence to be solved
for the highest-index vector moment.
\end{proof}

\subsection{Resonant blocks and partial elimination}

At a resonant index the diagonal block need not be invertible, but this
does not mean that the whole vector moment has to be added to the basic
family.  After the lower-index moments have already been reduced, the
block row at index $m$ can be written in the form
\begin{equation}
\mathcal D_m\mathbf I_m=\mathbf R_m,
\label{eq:4.6a}
\end{equation}
where $\mathbf R_m$ contains only lower-index vector moments and explicit
boundary terms.

Assume that
\[
\operatorname{rank}\mathcal D_m=r<d.
\]
Row reduction of $\mathcal D_m$ separates the columns into a set of pivot
columns $P_m$ and a set of free columns $F_m$.  The components of
$\mathbf I_m$ corresponding to $P_m$ can then be expressed in terms of
$\mathbf R_m$ and the components corresponding to $F_m$.  Thus a
singular diagonal block leaves only the free components unresolved at
that stage.

\begin{prop}
Let $\mathcal D_m$ be a singular diagonal block of rank $r$.  The block
row \eqref{eq:4.6a} leaves at most $d-r$ independent components of
$\mathbf I_m$ undetermined.  Equivalently, after row reduction, all
pivot components can be eliminated in favour of lower-index moments,
explicit boundary terms, and the free components.
\end{prop}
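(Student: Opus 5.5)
The plan is to treat \eqref{eq:4.6a} as a finite linear system over $\mathbb R$ whose unknowns are the $d$ components of $\mathbf I_m$. The right-hand side $\mathbf R_m$ is regarded as known data: it lives in the span of lower-index vector moments and explicit boundary terms. Since $\mathcal D_m\in M_d(\mathbb R)$ is a constant matrix, it is natural to apply Gaussian elimination. Concretely, I will choose an invertible matrix $E\in M_d(\mathbb R)$, a product of elementary row operations, such that $E\mathcal D_m$ is in reduced row echelon form. Multiplying \eqref{eq:4.6a} on the left by $E$ gives the equivalent system
\[
(E\mathcal D_m)\mathbf I_m=E\mathbf R_m .
\]
This is equivalent because $E$ is invertible. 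Moreover, the right-hand side $E\mathbf R_m$ is still an $\mathbb R$-linear combination of lower-index moments and boundary terms, since that class of expressions is closed under constant linear combinations.

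Next I will read off the structure of the echelon form. Because $\operatorname{rank}\mathcal D_m=r$, the matrix $E\mathcal D_m$ has exactly $r$ nonzero rows and $d-r$ zero rows. Its nonzero rows have leading entries equal to $1$, located in the $r$ pivot columns $P_m$; all other entries of a pivot column are zero. For each pivot index $i\in P_m$, the corresponding row therefore reads
\[
(\mathbf I_m)_i+\sum_{k\in F_m}e_{ik}(\mathbf I_m)_k=(E\mathbf R_m)_i .
\]
This expresses $(\mathbf I_m)_i$ in terms of $E\mathbf R_m$ and the free components, which establishes the ``equivalently'' part of the statement. Since $|F_m|=d-r$, at most $d-r$ components remain undetermined. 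The zero rows impose relations $0=(E\mathbf R_m)_i$ among lower-order data only. These give compatibility identities, or extra reductions of lower moments, but they never add unknowns at index $m$. This yields the upper bound ``at most''.

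The main point needing care is the word ``independent''. It is an upper bound, not an equality, because the free components might later be determined by other block rows of the infinite system. For instance, the row at a higher index $m'$ contains $\mathbf I_m$ with off-diagonal blocks. The zero-row relations might also give independent dependencies among the basic integrals. I will therefore phrase the conclusion strictly with respect to the single block row \eqref{eq:4.6a}: after it is used, the components of $\mathbf I_m$ are parametrized affinely by the $d-r$ free components. This shows that the solution set of \eqref{eq:4.6a}, viewed modulo lower data, is an affine space of dimension $\dim\ker\mathcal D_m=d-r$. That is exactly the claim.
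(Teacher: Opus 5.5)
Your proposal is correct and follows the paper's own argument: row-reduce the constant block $\mathcal D_m$, separate the $r$ pivot columns from the $d-r$ free columns, and solve each pivot equation for its pivot component in terms of the transformed right-hand side and the free components. Your explicit treatment of the zero rows as compatibility conditions on lower-index data adds detail the paper leaves implicit. One small indexing slip: the row carrying the pivot in column $i$ need not be row $i$, so $(E\mathbf R_m)_i$ should be indexed by that pivot row instead.
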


\begin{proof}
Apply elementary row operations to \eqref{eq:4.6a}.  Since row
operations preserve the solution set, the reduced system has $r$ pivot
columns and $d-r$ free columns.  Each pivot variable is determined by
one pivot equation in terms of the right-hand side and the free
variables.  Hence only the components indexed by the free columns may
remain undetermined.
\end{proof}

The block-band structure provides additional information that is not
visible in the single row \eqref{eq:4.6a}.  A free component may occur
with nonzero coefficient in one of the neighbouring block rows.  We
therefore use the following elimination rule at a resonant index:
first determine the pivot and free columns of $\mathcal D_m$; next
eliminate all pivot components; then inspect the finitely many
neighbouring block relations and use them, whenever possible, to
eliminate the remaining free components.  Only those independent free
components which cannot be removed by this neighbouring elimination
are retained as additional basic integrals.

This procedure is the block analogue of retaining a resonant scalar
moment in the basic family, but it can be strictly more economical:
a singular $d\times d$ block need not contribute all $d$ components,
and neighbouring relations may reduce the number of additional basic
integrals below $d-r$.

\subsection{The algebraic family $y^d=q(x)$}

We now consider the particularly transparent family
\begin{equation}
y^d=q(x),
\qquad
\frac{K'(x)}{K(x)}=y,
\label{eq:4.7}
\end{equation}
where $q(x)\in\mathbb R[x]$ and the extension generated by $y$ has
degree $d$ over $\mathbb R(x)$. Symbolic integration of algebraic
functions is classically based on Hermite-type and related reduction
procedures; see \cite{trager,bronstein2,malykh}. Our approach differs
in that we organize the corresponding moments into a finite block-band
system.

Differentiating $y^d=q(x)$ gives
\begin{equation}
y'=\frac{q'(x)}{d\,q(x)}\,y.
\label{eq:4.8}
\end{equation}
Hence
\[
V=
\operatorname{span}_{\mathbb R(x)}
\{1,y,\ldots,y^{d-1}\}
\]
is stable under differentiation.

Define
\[
I_n^{(j)}
=
\int x^ny^jK(x)\,dx,
\qquad
0\leq j\leq d-1.
\]
For $0\leq j\leq d-2$,
\begin{equation}
\frac{d}{dx}
\left(x^ny^jK(x)\right)
=
\left(
nx^{n-1}y^j
+
\frac{j}{d}x^ny^j\frac{q'(x)}{q(x)}
+
x^ny^{j+1}
\right)K(x).
\label{eq:4.9}
\end{equation}
After multiplication by $q(x)$, all coefficients are polynomial.
For $j=d-1$, the relation closes using
\[
y^d=q(x).
\]
Thus the families
\[
\{I_n^{(j)}\}_{n\geq0},
\qquad
0\leq j\leq d-1,
\]
satisfy a finite block-band recurrence.

\subsection{The model case $y^d=x$}

Consider
\begin{equation}
y^d=x,
\qquad
\frac{K'(x)}{K(x)}=y.
\label{eq:4.10}
\end{equation}
Then
\[
y'=\frac{1}{dx}y.
\]
For
\[
I_n^{(j)}=\int x^ny^jK(x)\,dx
\]
we obtain, for $0\leq j\leq d-2$,
\begin{equation}
\left(n+\frac{j}{d}\right)I_{n-1}^{(j)}
+
I_n^{(j+1)}
=
x^ny^jK(x)+C,
\label{eq:4.11}
\end{equation}
while
\begin{equation}
\left(n+\frac{d-1}{d}\right)I_{n-1}^{(d-1)}
+
I_{n+1}^{(0)}
=
x^ny^{d-1}K(x)+C.
\label{eq:4.12}
\end{equation}
These relations form a finite cyclic block system.

\begin{example}
Consider the quadratic extension
\[
y^2=x
\]
and choose
\[
\frac{K'(x)}{K(x)}=y=\sqrt{x}.
\]
On $x>0$ we may take
\[
K(x)=\exp\left(\frac23x^{3/2}\right).
\]
Define
\[
I_n=\int x^nK(x)\,dx,
\qquad
J_n=\int x^nyK(x)\,dx.
\]
Then
\begin{align}
nI_{n-1}+J_n
&=
x^nK(x)+C,
\label{eq:4.13}\\
\left(n+\frac12\right)J_{n-1}+I_{n+1}
&=
x^nyK(x)+C.
\label{eq:4.14}
\end{align}
Using \eqref{eq:4.13} with $n$ replaced by $n-1$, we obtain
\[
J_{n-1}
=
x^{n-1}K(x)-(n-1)I_{n-2}.
\]
Substitution into \eqref{eq:4.14} gives
\begin{equation}
I_{n+1}
=
\left(n+\frac12\right)(n-1)I_{n-2}
+
x^nyK(x)
-
\left(n+\frac12\right)x^{n-1}K(x)
+C.
\label{eq:4.15}
\end{equation}
Thus the auxiliary family $\{J_n\}$ can be eliminated and the scalar
moments satisfy a finite-band recurrence.  This is an explicit instance
of the neighbouring elimination described above: although the natural
block formulation is singular, a component not determined by one block
row is eliminated using the adjacent relation.  In particular, the moments
are recursively determined by the initial integrals
\[
I_0,\qquad I_1,\qquad I_2.
\]
\end{example}

\section{Reduction from polynomial-coefficient differential equations}

The first-order construction of Section~2 admits a broader formulation
which does not require the logarithmic derivative of $K$ to be rational.
We now assume that $K$ satisfies a linear differential equation with
polynomial coefficients.  This places the reduction procedure in the
standard holonomic framework; see, for example, \cite{az,chyzak,zeilberger}.

Suppose that
\begin{equation}
a_r(x)K^{(r)}(x)+a_{r-1}(x)K^{(r-1)}(x)
+\cdots+a_1(x)K'(x)+a_0(x)K(x)=0,
\label{eq:5.1}
\end{equation}
where
\[
a_j(x)\in\mathbb R[x],
\qquad
a_r(x)\not\equiv0.
\]
Write
\begin{equation}
a_j(x)=\sum_{s=0}^{d_j}a_{j,s}x^s,
\qquad
d_j=\deg a_j.
\label{eq:5.2}
\end{equation}

For $n\geq0$, define
\[
I_n(x)=\int x^nK(x)\,dx.
\]

We first record the repeated integration-by-parts identity used below.

\begin{lemma}
For $j\geq1$ and $s\geq0$,
\begin{align}
\int x^{n+s}K^{(j)}(x)\,dx
={}&
\sum_{t=0}^{j-1}
(-1)^t(n+s)_{\underline t}
x^{n+s-t}K^{(j-1-t)}(x)
\nonumber\\
&+
(-1)^j
(n+s)_{\underline j}
I_{n+s-j}(x),
\label{eq:5.3}
\end{align}
where
\[
(t)_{\underline j}
=
t(t-1)\cdots(t-j+1),
\qquad
(t)_{\underline0}=1.
\]
\end{lemma}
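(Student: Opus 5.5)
We argue by induction on $j\geq1$, with $n$ and $s$ fixed and $N:=n+s$. Both sides are understood as antiderivatives, so the identity holds up to an additive constant. Throughout we write $(N)_{\underline t}$ for the falling factorial.

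\emph{Base case $j=1$.} One integration by parts gives
\[
\int x^{N}K'(x)\,dx = x^{N}K(x) - N\int x^{N-1}K(x)\,dx .
\]
This is exactly \eqref{eq:5.3}: the boundary sum has the single term $t=0$, namely $x^NK$, and the integral term is $(-1)^1(N)_{\underline1}I_{N-1}$.

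\emph{Inductive step.} Assume \eqref{eq:5.3} holds for $j$ and for every exponent. Integrate $\int x^{N}K^{(j+1)}\,dx$ by parts once, differentiating $x^N$:
\[
\int x^{N}K^{(j+1)}\,dx = x^{N}K^{(j)} - N\int x^{N-1}K^{(j)}\,dx .
\]
Now apply the induction hypothesis with exponent $N-1$ in place of $N$. This gives
\[
\sum_{t=0}^{j-1}(-1)^t(N-1)_{\underline t}\,x^{N-1-t}K^{(j-1-t)} + (-1)^j(N-1)_{\underline j}\,I_{N-1-j}.
\]
Multiply this by $-N$ and use the relation $N\,(N-1)_{\underline t}=(N)_{\underline{t+1}}$. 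Reindexing with $t\mapsto t+1$, the boundary terms become $\sum_{t=1}^{j}(-1)^t(N)_{\underline t}\,x^{N-t}K^{(j-t)}$. The leading term $x^NK^{(j)}$ supplies the missing $t=0$ summand, so the sum runs over $0\le t\le j$. The integral term becomes $(-1)^{j+1}(N)_{\underline{j+1}}\,I_{N-j-1}$. This is \eqref{eq:5.3} with $j$ replaced by $j+1$.

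\emph{Negative exponents.} When $N-t<0$ occurs, we have $N<j$. One can check that the coefficient $(N)_{\underline t}$ vanishes as soon as $t>N$, because it contains the factor $0$. So any term with a negative power of $x$ carries a zero coefficient. In particular, the $I$-term vanishes when $N<j$, and no $\int x^{-1}\cdots$ ever arises. If $x=0$ lies in the interval, no negative powers are involved in any nonzero term.

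The only point needing care is this bookkeeping of falling factorials. It is what makes the formula valid uniformly for all $n,s\ge0$, including small exponents.
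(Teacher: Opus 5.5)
Your proof is correct and follows the same route as the paper: a base case $j=1$ by one integration by parts, then induction on $j$, where you make explicit the inductive step the paper leaves implicit, namely applying the hypothesis at exponent $N-1$ and using $N\,(N-1)_{\underline t}=(N)_{\underline{t+1}}$. Your observation that $(N)_{\underline t}=0$ for $t>N$ usefully covers the case $n+s<j$, which the paper does not discuss; note also that at $N=0$ the integration by parts has no second term, so the hypothesis at exponent $-1$ is never actually needed.
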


\begin{proof}
The identity follows by repeated integration by parts.  For $j=1$,
\[
\int x^{n+s}K'(x)\,dx
=
x^{n+s}K(x)
-
(n+s)I_{n+s-1}(x).
\]
The general case follows by induction on $j$.
\end{proof}

\subsection{Moment recurrence}

The passage from linear differential equations to recurrences for
integral families is related to the holonomic approach to symbolic
summation and integration; see, for example, \cite{chyzak}.  Our
construction emphasizes the resulting finite-band structure and its
use for the reduction of moments to a finite basic family.

\begin{thm}[Moment recurrence]
Under the assumptions \eqref{eq:5.1}, the moments $I_n$ satisfy a finite
recurrence
\begin{equation}
\sum_k C_k(n)I_{n+k}(x)=E_n(x),
\label{eq:5.4}
\end{equation}
where $C_k(n)\in\mathbb R[n]$ and $E_n(x)$ is an explicit linear
combination of
\[
x^mK(x),\,
x^mK'(x),\,
\ldots,\,
x^mK^{(r-1)}(x).
\]
More precisely,
\begin{equation}
C_k(n)
=
\sum_{\substack{0\leq j\leq r\\0\leq s\leq d_j\\s-j=k}}
(-1)^j
a_{j,s}(n+s)_{\underline j}.
\label{eq:5.5}
\end{equation}
Only finitely many coefficients $C_k$ are nonzero.  Their possible
shifts satisfy
\begin{equation}
-r\leq k\leq
\max_{0\leq j\leq r}(d_j-j).
\label{eq:5.6}
\end{equation}
\end{thm}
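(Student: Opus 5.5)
Multiply the differential equation \eqref{eq:5.1} by $x^n$ and integrate over $x$. This gives
\[
\sum_{j=0}^{r}\sum_{s=0}^{d_j}a_{j,s}\int x^{n+s}K^{(j)}(x)\,dx=C .
\]
The $j=0$ terms are already moments: they equal $a_{0,s}I_{n+s}$, which fits \eqref{eq:5.5} because $(n+s)_{\underline 0}=1$. For each $j\geq1$ we apply the preceding Lemma \eqref{eq:5.3} with the exponent $n+s$. This turns $\int x^{n+s}K^{(j)}\,dx$ into $(-1)^j(n+s)_{\underline j}I_{n+s-j}$ plus boundary terms of the form $(-1)^t(n+s)_{\underline t}\,x^{n+s-t}K^{(j-1-t)}(x)$ with $0\leq t\leq j-1$.

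Next we collect the moment terms by the shift $k=s-j$. The contribution $a_{j,s}(-1)^j(n+s)_{\underline j}I_{n+k}$ then gives exactly the coefficient $C_k(n)$ in \eqref{eq:5.5}. It is a polynomial in $n$ because each falling factorial $(n+s)_{\underline j}$ is a polynomial in $n$ of degree $j$. All boundary terms are moved to the right-hand side with a sign change, together with the integration constant, and they define $E_n(x)$. Each boundary term is a monomial times $K^{(i)}$ with $0\leq i=j-1-t\leq r-1$, as the statement requires.

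Finiteness and the range \eqref{eq:5.6} follow directly. Only pairs with $0\leq j\leq r$ and $0\leq s\leq d_j$ occur, so only finitely many $k$ appear. We have $k=s-j\geq -j\geq -r$, and $k\leq d_j-j\leq\max_j(d_j-j)$.

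The one delicate step concerns small indices, where $n+s-j$ or $n+s-t$ could be negative. There the falling factorial $(n+s)_{\underline j}$ vanishes whenever $0\leq n+s<j$, so any negative-index "moments" that appear carry zero coefficients. The same vanishing occurs in the boundary terms whenever $n+s-t<0$. To make this precise, we note that the Lemma's induction holds verbatim for integer exponents, since it never divides. Then, for $n\geq0$, every term with a negative exponent has coefficient $0$ because the falling factorial contains the factor $0$. So the recurrence only involves $I_m$ with $m\geq0$.

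The main obstacle is bookkeeping rather than any conceptual difficulty. We must match the double sum over $(j,s)$ with the single sum over $k$, keeping the signs $(-1)^j$ and $(-1)^t$ consistent with \eqref{eq:5.3}.
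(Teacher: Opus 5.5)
Your proposal is correct and follows the paper's proof: multiply \eqref{eq:5.1} by $x^n$, integrate, apply Lemma~5.1 to each term with $j\geq1$, group the moment terms by the shift $k=s-j$, and read off \eqref{eq:5.6} from the ranges $0\leq j\leq r$, $0\leq s\leq d_j$. Your extra remark is also correct and is a point the paper passes over: for $n\geq0$, any negative-index moment or boundary term carries a vanishing falling-factorial coefficient, so the recurrence genuinely involves only $I_m$ with $m\geq0$.
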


\begin{proof}
Multiply \eqref{eq:5.1} by $x^n$ and integrate.  For each term
$a_{j,s}x^{n+s}K^{(j)}(x)$ with $j\geq1$, apply Lemma~5.1.  The terms
containing derivatives of $K$ of order at most $r-1$ form the explicit
boundary expression $E_n(x)$.  The remaining integral term is
\[
(-1)^j a_{j,s}(n+s)_{\underline j}I_{n+s-j}(x).
\]
Grouping terms with the same shift
\[
k=s-j
\]
gives \eqref{eq:5.4} and \eqref{eq:5.5}.  Since
\[
0\leq j\leq r,
\qquad
0\leq s\leq d_j,
\]
the possible shifts satisfy \eqref{eq:5.6}.
\end{proof}

\subsection{Recurrence width}

The interval in \eqref{eq:5.6} gives only an a priori bound, since
different contributions to the same coefficient may cancel.  We
therefore distinguish the actual recurrence from this formal bound.

\begin{definition}
Let
\[
\ell=\min\{k:C_k\not\equiv0\},
\qquad
u=\max\{k:C_k\not\equiv0\}.
\]
The number
\[
\omega=u-\ell
\]
is called the \emph{recurrence width}.  We also define the
\emph{a priori recurrence width} by
\[
\omega_{\mathrm{ap}}
=
r+\max_{0\leq j\leq r}(d_j-j).
\]
\end{definition}

\begin{prop}
The recurrence width satisfies
\begin{equation}
\omega\leq\omega_{\mathrm{ap}}.
\label{eq:5.7}
\end{equation}
The inequality may be strict because of cancellations between
different contributions to the coefficients $C_k(n)$.
\end{prop}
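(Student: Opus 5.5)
The inequality \eqref{eq:5.7} follows directly from the shift bounds \eqref{eq:5.6} in the Moment recurrence theorem, so the plan has two parts.

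\emph{The bound.} First observe that the recurrence is nontrivial. Since $a_r\not\equiv0$, some $C_k$ is not identically zero, so $\ell$ and $u$ are well defined. To see this, take $s=d_r$ and $j=r$. The contribution $(-1)^r a_{r,d_r}(n+d_r)_{\underline r}$ is a polynomial in $n$ of degree exactly $r$. Every other contribution to the same shift $k=d_r-r$ comes from some $j<r$ and so has degree $j<r$ in $n$. Hence $C_{d_r-r}\not\equiv0$.

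Next I invoke \eqref{eq:5.6}. Every $k$ with $C_k\not\equiv0$ satisfies $-r\le k\le \max_j(d_j-j)$. Applying this to $k=\ell$ and to $k=u$ gives $\ell\ge -r$ and $u\le\max_j(d_j-j)$. Therefore
\[
\omega=u-\ell\le \max_{0\le j\le r}(d_j-j)+r=\omega_{\mathrm{ap}} .
\]

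\emph{Possible strictness.} It remains to show by example that the inequality can be strict. The natural source is cancellation within the extreme coefficient $C_{-r}$, which collects the terms with $s=j-r$. Only $j=r$, $s=0$ qualifies, so
\[
C_{-r}(n)=(-1)^r a_{r,0}(n)_{\underline r}.
\]
This vanishes identically exactly when $a_r(0)=0$. A simple instance is $K'=K$ written as $xK'-xK=0$. Here $r=1$, $d_1=d_0=1$, and $\omega_{\mathrm{ap}}=1+\max(0,1)=2$. Computing \eqref{eq:5.5} gives $C_{-1}=0$, $C_0=-(n+1)$ and $C_1=-1$, so $\omega=1<2$.

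Alternatively, genuine cancellation of different contributions at the top shift can be shown with Example~2 of Section~2. There the recurrence has only parity-preserving shifts, so its actual width is smaller than the formal count; after translating $K'/K=A/B$ into the equation $BK'-AK=0$, one can read the cancellation off directly.

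The only point needing care is the nonvanishing argument, which guarantees that $\omega$ is defined. The degree-in-$n$ comparison handles it.
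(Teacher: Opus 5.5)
Your proof of $\omega\le\omega_{\mathrm{ap}}$ is the paper's argument: apply \eqref{eq:5.6} to $k=\ell$ and $k=u$. The paper gives no argument at all for the strictness clause, so your additions go beyond it. Your degree argument that $C_{d_r-r}\not\equiv0$, so that $\ell$ and $u$ are defined, is correct. The example $xK'-xK=0$ is also correct: it gives $C_{-1}=0$, $C_0=-(n+1)$, $C_1=-1$, hence $\omega=1<2=\omega_{\mathrm{ap}}$.

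The \emph{alternative} via Example~2 of Section~2 is false and should be removed. Written as $a_1K'+a_0K=0$, that example has $a_1=x^2+1$ and $a_0=2x^3-2(\alpha-1)x$, so $\omega_{\mathrm{ap}}=1+\max(3,1)=4$. On the other hand, \eqref{eq:5.5} gives $C_{-1}=-n$, $C_0=0$, $C_1=-(n+2\alpha)$, $C_2=0$, $C_3=2$, so $\omega=3-(-1)=4=\omega_{\mathrm{ap}}$. The parity structure only creates zero coefficients strictly inside the band, and these do not change $u-\ell$. Moreover, $C_0$ and $C_2$ vanish because each $a_{j,s}$ entering them is zero, not through cancellation.

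In fact your own degree argument works at every shift. The contribution of $(j,s)$ to $C_k$ is $(-1)^ja_{j,s}(n+s)_{\underline j}$, which has degree exactly $j$ in $n$ when $a_{j,s}\neq0$, and distinct $j$ give distinct degrees. Hence $C_k\equiv0$ exactly when $a_{j,j+k}=0$ for all $j$, so different contributions never cancel. Two consequences follow: $u=\max_j(d_j-j)$ always, and $\omega<\omega_{\mathrm{ap}}$ holds precisely when $a_r(0)=0$. Your first example therefore exhibits the only possible mechanism for strictness. That mechanism is the vanishing of a single contribution, not the cancellation the statement invokes, so an example of strictness by cancellation cannot exist. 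Present your first example on those terms.
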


\begin{proof}
By \eqref{eq:5.6},
\[
\ell\geq-r,
\qquad
u\leq\max_{0\leq j\leq r}(d_j-j).
\]
Hence
\[
\omega=u-\ell
\leq
r+\max_{0\leq j\leq r}(d_j-j).
\]
\end{proof}

\subsection{Finite reduction}

The finite recurrence leads to an effective reduction whenever its
highest-shift coefficient does not vanish.

\begin{thm}[Finite reduction]
Assume that
\[
C_u(n)\not\equiv0.
\]
Let
\[
\mathcal E=
\{n\in\mathbb Z_{\geq0}:C_u(n)=0\}.
\]
Then $\mathcal E$ is finite.  For every
$n\notin\mathcal E$,
\begin{equation}
I_{n+u}(x)
=
-\frac{1}{C_u(n)}
\sum_{k=\ell}^{u-1}
C_k(n)I_{n+k}(x)
+
\frac{E_n(x)}{C_u(n)}.
\label{eq:5.8}
\end{equation}
Consequently, after excluding the finitely many resonant indices,
all sufficiently large moments can be reduced recursively to a finite
set of initial moments and explicit boundary terms.
\end{thm}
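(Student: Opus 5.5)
The plan has three steps: show that $\mathcal E$ is finite, solve the recurrence of the Moment recurrence theorem for its top term, and then run a strong induction on the index.

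\emph{Finiteness of $\mathcal E$.} By \eqref{eq:5.5}, each coefficient $C_k(n)$ is a finite sum of terms $(-1)^ja_{j,s}(n+s)_{\underline j}$. Each such term is a polynomial in $n$ of degree at most $r$. So $C_u\in\mathbb R[n]$, and by hypothesis it is not the zero polynomial. A nonzero real polynomial has at most $\deg C_u\le r$ real roots, so $\mathcal E\subset\mathbb Z_{\ge0}$ is finite with $|\mathcal E|\le r$. If $C_u$ is a nonzero constant, then $\mathcal E=\emptyset$.

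\emph{Solving for the top moment.} Start from \eqref{eq:5.4}, written with the actual range of shifts $\ell\le k\le u$; by the definition of $\ell$ and $u$, all other $C_k$ vanish identically. For $n\notin\mathcal E$ we have $C_u(n)\neq0$. Isolating the term $C_u(n)I_{n+u}$ and dividing by $C_u(n)$ gives \eqref{eq:5.8} directly. The only care needed is to treat the identity as one between antiderivatives, with the additive constants absorbed into $E_n$ as in the earlier sections.

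\emph{Recursive reduction.} Set $n_0=\max(\mathcal E\cup\{-1\})+1$. For every $n\ge n_0$, \eqref{eq:5.8} expresses $I_{n+u}$ through $I_{n+\ell},\dots,I_{n+u-1}$, all of which have strictly smaller index, together with the boundary term $E_n/C_u(n)$. By the Moment recurrence theorem, $E_n$ is an explicit combination of $x^mK^{(i)}$ with $0\le i\le r-1$.

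A strong induction on $N=n+u$ then shows the following. Every $I_N$ with $N\ge n_0+u$ is a linear combination of the finitely many initial moments
\[
\{I_m: \max(0,n_0+\ell)\le m\le n_0+u-1\}\cup\{I_m:0\le m< n_0+u\}
\]
plus explicit boundary terms. Concretely, the initial set can be taken to be $\{I_0,\dots,I_{n_0+u-1}\}$.

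There is one subtlety: when $\ell<0$, indices $n+k$ below zero could appear for small $n$. Because we only apply the recurrence for $n\ge n_0\ge 0$ and $N\ge n_0+u$, we can if necessary increase $n_0$ so that $n_0+\ell\ge0$. Then every moment that appears has nonnegative index and lies in the initial set, or has already been reduced by the induction hypothesis.

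The main obstacle is bookkeeping rather than analysis. One must verify that the finitely many excluded indices in $\mathcal E$ only enlarge the initial set by finitely many moments, and that negative shifts never leave the range $n\ge0$. Both are handled by choosing $n_0$ larger than all elements of $\mathcal E$ and larger than $-\ell$.
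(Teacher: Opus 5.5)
Your proposal is correct and follows the paper's proof. Both show that $\mathcal E$ is finite because $C_u$ is a nonzero polynomial, solve \eqref{eq:5.4} for the top term $I_{n+u}$ when $C_u(n)\neq0$, and then apply \eqref{eq:5.8} repeatedly; your explicit $n_0$, the bound $|\mathcal E|\le r$, and the strong induction just spell out the paper's one-line ``repeated application''. The negative-index worry is in fact vacuous: for integers $n\geq0$ and $s\geq0$ with $n+s<j$, one factor of $(n+s)_{\underline j}$ is zero, so every $C_k(n)$ with $n+k<0$ vanishes.
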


\begin{proof}
Since $C_u(n)$ is a nonzero polynomial in $n$, it has only finitely many
integer zeros.  For $n\notin\mathcal E$, equation \eqref{eq:5.4} can be
solved for $I_{n+u}$, which gives \eqref{eq:5.8}.  All moments on the
right-hand side have smaller index than $n+u$.  Repeated application
therefore gives the asserted finite reduction.
\end{proof}

\begin{remark}
The elements of $\mathcal E$ will be called \emph{resonant indices}.
The recurrence itself remains valid at such indices, but the displayed
form \eqref{eq:5.8} cannot be used there.  A resonant moment may be
retained as an additional basic integral.
\end{remark}

\subsection{A matrix formulation}

The recurrence \eqref{eq:5.4} can be represented by a triangular
band system after the moments are ordered by increasing index.  For a
fixed target index $M$, only finitely many rows and columns are
required.  Let
\[
\mathcal M^{[M]}
\]
denote the corresponding finite truncation.

The resulting finite linear systems are triangular band systems for
which backward substitution is standard; see, for example,
\cite{quarteroni,stoer}.

If all diagonal entries of $\mathcal M^{[M]}$ are nonzero, then
\[
\det\mathcal M^{[M]}
=
\prod_i\mathcal M^{[M]}_{i,i}\neq0.
\]
Hence the truncation is nonsingular.  The reduction coefficients for
the target moment are obtained from the corresponding column by
backward substitution.  Thus, as in Section~2, no inverse of an
infinite matrix is required.

\subsection{Relation with the first-order case}

The first-order construction of Section~2 is recovered by taking
$r=1$ and
\[
B(x)K'(x)-A(x)K(x)=0.
\]
Thus
\[
a_1(x)=B(x),
\qquad
a_0(x)=-A(x).
\]
The resulting recurrence is equivalent to
\[
\frac{d}{dx}
\left(x^nB(x)K(x)\right)
=
\left[
nx^{n-1}B(x)
+
x^n(B'(x)+A(x))
\right]K(x).
\]
Hence the finite-band construction of Section~2 is the first-order
case of the present differential-equation formulation.

\subsection{A Bessel-function example}

Consider
\[
K(x)=J_2(x),
\]
where $J_2$ is the Bessel function of the first kind. It satisfies the
second-order differential equation
\[
x^2K''(x)+xK'(x)+(x^2-4)K(x)=0.
\]
Thus, in the notation of \eqref{eq:5.1},
\[
a_2(x)=x^2,
\qquad
a_1(x)=x,
\qquad
a_0(x)=x^2-4.
\]

For
\[
I_n(x)=\int x^nJ_2(x)\,dx,
\]
the general construction gives a finite recurrence. Indeed,
\[
\int x^{n+2}K''(x)\,dx
=
x^{n+2}K'(x)
-(n+2)x^{n+1}K(x)
+(n+2)(n+1)I_n(x),
\]
while
\[
\int x^{n+1}K'(x)\,dx
=
x^{n+1}K(x)
-(n+1)I_n(x).
\]
Substitution into the differential equation yields
\[
I_{n+2}(x)
+
\bigl((n+1)^2-4\bigr)I_n(x)
=
(n+1)x^{n+1}K(x)
-
x^{n+2}K'(x)
+C.
\]
Equivalently,
\begin{equation}
I_{n+2}(x)
=
(n+1)x^{n+1}J_2(x)
-
x^{n+2}J_2'(x)
+
\bigl(4-(n+1)^2\bigr)I_n(x)
+C.
\label{eq:5.bessel-recurrence}
\end{equation}

Thus the moments split into two finite reduction chains according to
parity. For example, repeated application of
\eqref{eq:5.bessel-recurrence} gives
\[
\begin{aligned}
I_6(x)
={}&
\left(5x^5-63x^3+105x\right)J_2(x)
\\
&+
\left(-x^6+21x^4-105x^2\right)J_2'(x)
+
315I_0(x)
+C.
\end{aligned}
\]
Hence
\[
\begin{aligned}
\int x^6J_2(x)\,dx
={}&
\left(5x^5-63x^3+105x\right)J_2(x)
\\
&+
\left(-x^6+21x^4-105x^2\right)J_2'(x)
+
315\int J_2(x)\,dx
+C.
\end{aligned}
\]
Using
\[
J_2'(x)=\frac{J_1(x)-J_3(x)}{2},
\]
the same reduction may also be written as
\[
\begin{aligned}
\int x^6J_2(x)\,dx
={}&
315\int J_2(x)\,dx
\\
&-
\frac12x^2\left(x^4-21x^2+105\right)
\left(J_1(x)-J_3(x)\right)
\\
&+
x\left(5x^4-63x^2+105\right)J_2(x)
+C.
\end{aligned}
\]
This example illustrates that the finite-band reduction is not
confined to first-order hyperexponential factors. A second-order
polynomial-coefficient differential equation for a classical special
function produces an explicit finite recurrence and reduces a
high-order moment to a finite basic family.

\section{Algorithmic reduction and complexity}

The finite-band structure developed above leads to an effective
algorithm for reducing a prescribed moment.  The computation can be
performed directly from the recurrence relation.  Equivalently, it can
be interpreted as backward substitution in a finite triangular band
system.

\subsection{Scalar reduction algorithm}

Consider a recurrence
\begin{equation}
\sum_{k=\ell}^{u} C_k(n)I_{n+k}(x)=E_n(x),
\label{eq:6.1}
\end{equation}
where
\[
C_u(n)\not\equiv0,
\]
and let
\[
\mathcal B=\{I_{m_1},\ldots,I_{m_s}\}
\]
be a chosen finite set of basic moments.

For a fixed target index $M$, we assume that
\[
C_u(n)\neq0
\]
for all indices used in the reduction.  The reduction is given by the
following algorithm.

\begin{algorithm}[h]
\caption{Direct scalar finite-band reduction}
\label{alg:scalar}
\begin{algorithmic}[1]
\Require Recurrence coefficients $C_k(n)$, boundary terms $E_n(x)$,
target index $M$, and a basic family
$\mathcal B=\{I_{m_1},\ldots,I_{m_s}\}$.
\Ensure A representation
\[
I_M=\sum_{j=1}^{s}r_{M,j}I_{m_j}+F_M(x).
\]

\State Initialize the representation of each basic moment $I_{m_j}$ by
the corresponding unit vector in $\mathbb R^s$.

\For{each non-basic index $m$ required to reach $M$, in increasing order}
    \State Set $n=m-u$.
    \If{$C_u(n)=0$}
        \State Declare $m$ resonant and retain $I_m$ as an additional
        basic moment.
    \Else
        \State Compute
        \[
        I_m=
        -\frac{1}{C_u(n)}
        \sum_{k=\ell}^{u-1}
        C_k(n)I_{n+k}
        +
        \frac{E_n(x)}{C_u(n)}.
        \]
        \State Replace the moments on the right-hand side by their
        previously computed representations.
    \EndIf
\EndFor

\State Collect the coefficients of the basic moments and the explicit
boundary terms.
\end{algorithmic}
\end{algorithm}

\begin{prop}
Assume that no resonance is encountered in
Algorithm~\ref{alg:scalar}.  Then the algorithm terminates after
finitely many steps and produces the unique representation of $I_M$
obtained from the recurrence \eqref{eq:6.1} and the prescribed basic
family.
\end{prop}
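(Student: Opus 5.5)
The proof is an induction on the index $m$ in the order in which the loop of Algorithm~\ref{alg:scalar} processes the non-basic indices. The invariant is this: when $m$ is processed, every moment $I_{n+k}$ with $\ell\leq k\leq u-1$ and $n=m-u$ already has a representation $\sum_j r_{\cdot,j}I_{m_j}+F(x)$, with $r\in\mathbb R^s$ and $F$ a finite combination of boundary terms. To make this invariant meaningful, I will first fix the finite index set the algorithm has to visit. This is the set of all indices between $\min(\ell,0)$ (or more precisely, the least index reachable downward from $M$ by shifts in $[\ell-u,-1]$) and $M$ that are not basic. Each application of \eqref{eq:5.8} lowers the index by at least one and at most $u-\ell=\omega$, so only indices in the interval $[\,m_{\min},M\,]$ occur. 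This interval is finite.

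\textbf{Termination.} The required set of indices is finite, as noted above. Each loop iteration performs finitely many arithmetic operations: evaluating $C_k(n)$, which are polynomials in $n$, and combining at most $\omega$ previously stored vectors in $\mathbb R^s$ together with boundary expressions. Hence the algorithm stops after finitely many steps. I will also note that the no-resonance hypothesis guarantees the \texttt{Else} branch is always taken. The division by $C_u(n)$ is then legitimate, and the basic family never grows.

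\textbf{Well-definedness.} This is the step I expect to need the most care. I must check that every moment on the right of the update formula has an index strictly smaller than $m$, namely $n+k=m-(u-k)$ with $u-k\geq1$. Each such moment is therefore either basic, with a unit-vector representation, or was processed earlier because indices are visited in increasing order. The subtle point is that the chosen basic family must contain all the "initial" indices that the downward chain reaches but that are never produced by the recurrence. These are the $\omega$ lowest indices of the window, which is exactly the role of $\mathcal B$. I will state this as the standing compatibility assumption implicit in the algorithm: every index below the first processed one that appears is in $\mathcal B$. Then the strong induction goes through.

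\textbf{Uniqueness.} I will argue through the matrix formulation of the subsection on the matrix formulation. Order the unknowns by index and write the relations used as the finite truncation $\mathcal M^{[M]}$ restricted to the non-basic indices. This gives a lower/upper triangular system whose diagonal entries are $C_u(n)\neq0$. Its determinant is the product of the diagonal entries and hence nonzero, so the system has a unique solution expressing the non-basic moments in terms of the basic ones and the boundary terms. The algorithm computes exactly this solution by substitution, as in \eqref{eq:2.14}--\eqref{eq:2.15}. Any representation obtained from \eqref{eq:6.1} and $\mathcal B$ must coincide with it, which gives the asserted uniqueness of $r_{M,j}$ and $F_M$.
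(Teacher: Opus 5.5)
Your proposal is correct and follows essentially the paper's proof. Indices are processed in increasing order, and each right-hand moment has index $m-(u-k)<m$, so it is either basic or already computed; termination comes from the finiteness of the target $M$, and uniqueness comes from $C_u(n)\neq0$, which you repackage as the nonvanishing determinant of the triangular truncation rather than arguing step by step as the paper does. Your explicit compatibility assumption on $\mathcal B$ usefully spells out what the paper's proof takes for granted, but the gloss identifying the required basic indices with the $\omega$ lowest ones is not needed and can fail: for $K'=K$ one has $\omega=1$, yet the relation at $n=0$ already gives $I_0=K$ because $C_{-1}(0)=0$.
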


\begin{proof}
At every nonresonant step, equation \eqref{eq:6.1} can be solved for
the highest-index moment:
\[
I_m=
-\frac{1}{C_u(n)}
\sum_{k=\ell}^{u-1}
C_k(n)I_{n+k}
+
\frac{E_n(x)}{C_u(n)},
\qquad n=m-u.
\]
All moments on the right-hand side have index strictly smaller than
$m$.  Hence, when the moments are processed in increasing order, their
representations have already been computed or belong to the prescribed
basic family.  Since the target index $M$ is finite, only finitely many
steps are required.  The representation is uniquely determined at each
step because $C_u(n)\neq0$.
\end{proof}

\subsection{Finite matrix realization}

The direct recurrence algorithm has an equivalent matrix
interpretation.  For a fixed target index $M$, let
\[
\mathcal M^{[M]}
\]
denote the finite triangular band matrix obtained from the recurrence by
ordering the relations according to their largest moment index.  The
resulting finite linear systems are triangular band systems for which
backward substitution is standard; see, for example,
\cite{quarteroni,stoer}.

If all diagonal entries of $\mathcal M^{[M]}$ are nonzero, then
\[
\det\mathcal M^{[M]}
=
\prod_i\mathcal M^{[M]}_{i,i}\neq0.
\]
Consequently, $\mathcal M^{[M]}$ is nonsingular.  If
$v^{(M)}$ denotes the reduction column, then
\[
\mathcal M^{[M]}v^{(M)}=e_M.
\]
The entries of this column are computed by backward substitution:
\begin{equation}
v^{(M)}_M=
\frac{1}{\mathcal M^{[M]}_{M,M}},
\label{eq:6.2}
\end{equation}
and
\begin{equation}
v^{(M)}_i
=
-\frac{1}{\mathcal M^{[M]}_{i,i}}
\sum_{j=i+1}^{\min(i+h,M)}
\mathcal M^{[M]}_{i,j}v^{(M)}_j.
\label{eq:6.3}
\end{equation}
Thus the algorithm requires only a finite truncation.  In particular,
the inverse of the infinite recurrence matrix is never needed.

A vanishing diagonal entry of $\mathcal M^{[M]}$ is precisely the
matrix manifestation of a resonant index.  In this case the
corresponding moment has to be retained as an additional basic moment.

\subsection{Computational reduction example}

We illustrate the two equivalent implementations of the reduction
procedure on a simple case for which the result can also be checked by
hand.  Let
\[
K'(x)-K(x)=0,
\qquad\text{so that}\qquad K(x)=c e^x,
\]
and define
\[
I_n(x)=\int x^nK(x)\,dx.
\]
Multiplication of the differential equation by $x^n$ and one
integration by parts give
\begin{equation}
I_n=x^nK(x)-nI_{n-1}.
\label{eq:6.computational-recurrence}
\end{equation}
There are no resonant indices, and taking $I_0$ as the basic moment,
successive reduction gives
\[
I_1=xK-I_0,
\]
\[
I_2=(x^2-2x)K+2I_0,
\]
\[
I_3=(x^3-3x^2+6x)K-6I_0,
\]
and hence, for the target index $M=4$,
\begin{equation}
I_4=
\left(x^4-4x^3+12x^2-24x\right)K(x)+24I_0.
\label{eq:6.computational-I4}
\end{equation}

The accompanying implementation computes this reduction in two ways:
by direct recurrence substitution and by solving the corresponding
finite triangular band system.  For the target $I_4$ both procedures
return exactly the expression in \eqref{eq:6.computational-I4}; their
symbolic difference simplifies to zero.  This provides a small
reproducible check that the direct and finite-matrix realizations of the
algorithm agree on the same reduction problem.

\subsection{Arithmetic complexity}

We use the arithmetic operation model in which one addition,
subtraction, multiplication, or division in the coefficient field has
unit cost.

\begin{thm}[Scalar complexity]
Let $M$ be the target index and let
\[
\omega=u-\ell
\]
be the recurrence width.  Assume that the required recurrence
coefficients and diagonal inverses are available at unit cost.  Then
one reduction column can be computed in
\[
O(\omega M)
\]
arithmetic operations and
\[
O(M)
\]
storage.

In particular, for fixed recurrence width, the arithmetic complexity
is linear in the target index $M$.
\end{thm}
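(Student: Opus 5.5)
The plan is to count operations directly in Algorithm~\ref{alg:scalar}, or equivalently in the backward substitution \eqref{eq:6.2}--\eqref{eq:6.3} for $\mathcal M^{[M]}v^{(M)}=e_M$; the two realizations are interchangeable by the preceding discussion. I will use the matrix form, since the bound is most transparent there. The truncation $\mathcal M^{[M]}$ has size $(M+1)\times(M+1)$. Its rows come from the recurrence \eqref{eq:6.1} ordered by largest moment index, so row $i$ has nonzero entries only in columns $i,\dots,i+\omega$. Hence the bandwidth $h$ appearing in \eqref{eq:6.3} may be taken equal to $\omega$.

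First I will show that each entry $v^{(M)}_i$ costs $O(\omega)$ operations. The entries $\mathcal M^{[M]}_{i,j}=C_k(n)$ and the inverses $1/\mathcal M^{[M]}_{i,i}=1/C_u(n)$ are available at unit cost by hypothesis. The sum in \eqref{eq:6.3} has at most $\min(\omega, M-i)\le\omega$ terms. Evaluating it therefore needs at most $\omega$ multiplications and $\omega-1$ additions, plus one multiplication by the diagonal inverse and a sign change, for a total of at most $2\omega+2$ operations. The initial step \eqref{eq:6.2} costs $O(1)$.

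Next I will sum over $i=M,M-1,\dots,0$. This gives at most $(M+1)(2\omega+2)=O(\omega M)$ operations; the constant is absorbed for $\omega\ge1$, and the case $\omega=0$ is trivial. If the recursion is run directly as in Algorithm~\ref{alg:scalar}, each step at index $m$ combines at most $\omega$ previously computed representations. Updating the boundary contribution $F_M$ adds one term per step, which is also $O(1)$ under the unit-cost convention for the given $E_n(x)$, so that route gives the same count.

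For storage, the full column $v^{(M)}$ is kept, which is $M+1$ numbers. The matrix $\mathcal M^{[M]}$ is never formed, since its entries are generated on demand, so storage is $O(M)$. (Only the last $\omega$ entries are needed at any moment, but the output column itself has length $O(M)$.) Setting $\omega$ fixed then gives linearity in $M$.

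The main point requiring care is the identification of the bandwidth of $\mathcal M^{[M]}$ with the recurrence width $\omega=u-\ell$. I must check that ordering the rows by the largest index $n+u$ places the nonzero entries at offsets $0,\dots,\omega$ from the diagonal. I must also check that the finitely many initial rows, those for basic moments, which are unit rows, do not increase the per-row cost.
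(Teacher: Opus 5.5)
Your proposal is correct and is essentially the paper's own proof: you count $O(M)$ backward-substitution steps in \eqref{eq:6.3}, each summing at most $\omega$ band terms whose coefficients and diagonal inverses cost unit time, and you note that one reduction column needs $O(M)$ storage; your identification of the bandwidth $h$ with $\omega$ only spells out what the paper uses implicitly. One caveat: drop the aside that running Algorithm~\ref{alg:scalar} forward gives the same count, because each forward step combines $\omega$ representation vectors of length $s$, and the explicitly tracked boundary part $F_m$ grows with $m$, so updating it is not an $O(1)$ step --- computing the single backward column is exactly what avoids this cost.
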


\begin{proof}
There are $O(M)$ backward-substitution steps, and at each step at most
$\omega$ terms contribute to the sum in \eqref{eq:6.3}.  Hence the
total number of arithmetic operations is $O(\omega M)$.  Storing one
reduction column requires $O(M)$ coefficients.
\end{proof}

The estimate concerns a single target moment.  Computing an entire
inverse matrix is a different problem and is not required by the
reduction procedure.

\subsection{Block-band complexity}

In the algebraic setting of Section~4, scalar recurrence coefficients
are replaced by $d\times d$ blocks.  Let $h$ denote the block bandwidth.
If ordinary matrix multiplication is used, multiplication of two
$d\times d$ matrices requires $O(d^3)$ arithmetic operations.

\begin{thm}[Block complexity]
Assume that the diagonal blocks are invertible.  Then one reduction
column of a block-band triangular system with target index $M$ can be
computed in
\[
O(hMd^3)
\]
arithmetic operations and
\[
O(Md^2)
\]
storage.
\end{thm}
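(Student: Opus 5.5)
The plan is to mirror the proof of the scalar complexity theorem, now counting operations on $d\times d$ blocks. First I fix the structure of the system. The moments are ordered by index, so the unknowns are the vector moments $\mathbf I_0,\ldots,\mathbf I_M$, and the relations are indexed by their largest moment index. This gives a block upper triangular truncation $\mathcal M^{[M]}$ with $(M+1)\times(M+1)$ blocks. In each block row $i$, only the blocks $\mathcal M^{[M]}_{i,j}$ with $i\le j\le\min(i+h,M)$ can be nonzero; this is the block analogue of the band structure coming from \eqref{eq:4.6}.

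A reduction column is now a block column $V^{(M)}=(V_0,\ldots,V_M)$, with $V_i\in M_d(\mathbb R)$, satisfying $\mathcal M^{[M]}V^{(M)}=E_M$. Here $E_M$ has the identity block in position $M$ and zero blocks elsewhere. Block backward substitution is the direct analogue of \eqref{eq:6.2}--\eqref{eq:6.3}:
\[
V_M=\left(\mathcal M^{[M]}_{M,M}\right)^{-1},\qquad
V_i=-\left(\mathcal M^{[M]}_{i,i}\right)^{-1}\sum_{j=i+1}^{\min(i+h,M)}\mathcal M^{[M]}_{i,j}V_j .
\]
Correctness follows exactly as in the scalar case, because the diagonal blocks are assumed invertible. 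Proceeding from $i=M$ down to $0$, each $V_i$ is uniquely determined by the blocks $V_j$ with $j>i$, which have already been computed.

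Next I count operations at a single step $i$. Each of the at most $h$ block products $\mathcal M^{[M]}_{i,j}V_j$ costs $O(d^3)$ with ordinary matrix multiplication. Summing the at most $h$ resulting $d\times d$ matrices costs $O(hd^2)$. Applying the inverse of the diagonal block costs one further multiplication, so $O(d^3)$. If the inverse is not supplied, it costs an $O(d^3)$ Gaussian elimination, or equivalently one solves against an LU factorization in $O(d^3)$. Hence each step costs $O(hd^3)$, and the $M+1$ steps give $O(hMd^3)$ in total.

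For storage, the column consists of $M+1$ blocks with $d^2$ entries each, which is $O(Md^2)$. I would also remark that the band blocks of $\mathcal M^{[M]}$ can be generated on demand from the affine formula \eqref{eq:4.6}. Since only $h+1$ blocks are needed at a time, the matrix itself contributes only $O(hd^2)$ extra storage; if $h$ is not regarded as fixed, this should be noted separately.

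The main obstacle is a modelling point rather than a computation: one must specify what a reduction column means in the block setting. It could be the full block $V_i$, or its action on a single target component, which is a vector. The plan is to take the full block, which gives the stated bounds. I would then remark that tracking only a single component of $\mathbf I_M$ replaces the block products by matrix--vector products and lowers the cost to $O(hMd^2)$, provided the diagonal blocks are pre-factored. The stated $O(hMd^3)$ bound therefore holds in either interpretation.
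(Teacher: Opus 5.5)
Your proposal is correct and follows the paper's argument: $O(M)$ block back-substitution steps, at most $h$ block products of cost $O(d^3)$ per step, and $O(M)$ stored $d\times d$ blocks. Your explicit block recursion, your accounting for inverting or factoring the diagonal blocks at $O(d^3)$ each, and your remark on the vector-valued interpretation are refinements that the paper's short proof only implies.
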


\begin{proof}
There are $O(M)$ block rows and at most $h$ relevant blocks per row.
Each block operation has cost $O(d^3)$ in the standard matrix
multiplication model.  Hence the total cost is $O(hMd^3)$.  The
reduction column contains $O(M)$ blocks of size $d\times d$, requiring
$O(Md^2)$ storage.
\end{proof}

\subsection{Computational experiment}

To complement the arithmetic complexity estimate, we performed a
simple timing experiment for the scalar finite-band reduction with
fixed bandwidth $\omega=4$.  The benchmark measures the arithmetic
recurrence/back-substitution core of the reduction; symbolic
simplification routines such as \texttt{FullSimplify} are not included,
so that the experiment reflects the arithmetic-operation model used in
the preceding complexity theorem.

We considered target indices
\[
M=1000,\ 2000,\ 4000,\ 8000,\ 16000.
\]
For each value of $M$, the reported running time is the median of five
repetitions.  The results are shown in Table~\ref{tab:scalar-benchmark}.

\begin{table}[ht]
\centering
\par\vspace{8pt}
\begin{tabular}{r r r r}
\hline
$M$ & $\omega M$ & Median time (s) & Time/$M$ (s)\\
\hline
1000  & 4000  & 0.006129 & $6.129\times 10^{-6}$\\
2000  & 8000  & 0.009194 & $4.597\times 10^{-6}$\\
4000  & 16000 & $0.018627$ & $4.65675\times 10^{-6}$\\
8000  & 32000 & $0.040527$ & $5.065875\times 10^{-6}$\\
16000 & 64000 & $0.084455$ & $5.2784375\times 10^{-6}$\\
\hline
\end{tabular}

\par\vspace{12pt}

\caption{Scalar finite-band benchmark for fixed bandwidth $\omega=4$.
Each reported time is the median of five repetitions.}
\label{tab:scalar-benchmark}
\end{table}

The quotient $T(M)/M$ remains approximately constant over this range.
Moreover, a least-squares power-law fit of the form
\[
T(M)\sim cM^{\gamma}
\]
gives
\[
\gamma\approx0.97.
\]
Thus, within the tested range, the observed running time is essentially
linear in the target index $M$.  This numerical behaviour is consistent
with the theoretical arithmetic complexity $O(\omega M)$ for fixed
bandwidth.  The experiment is intended as a validation of the scaling
predicted by the arithmetic model, rather than as a machine-independent
performance estimate.

\begin{figure}[ht]
\centering
\includegraphics[width=0.72\textwidth]{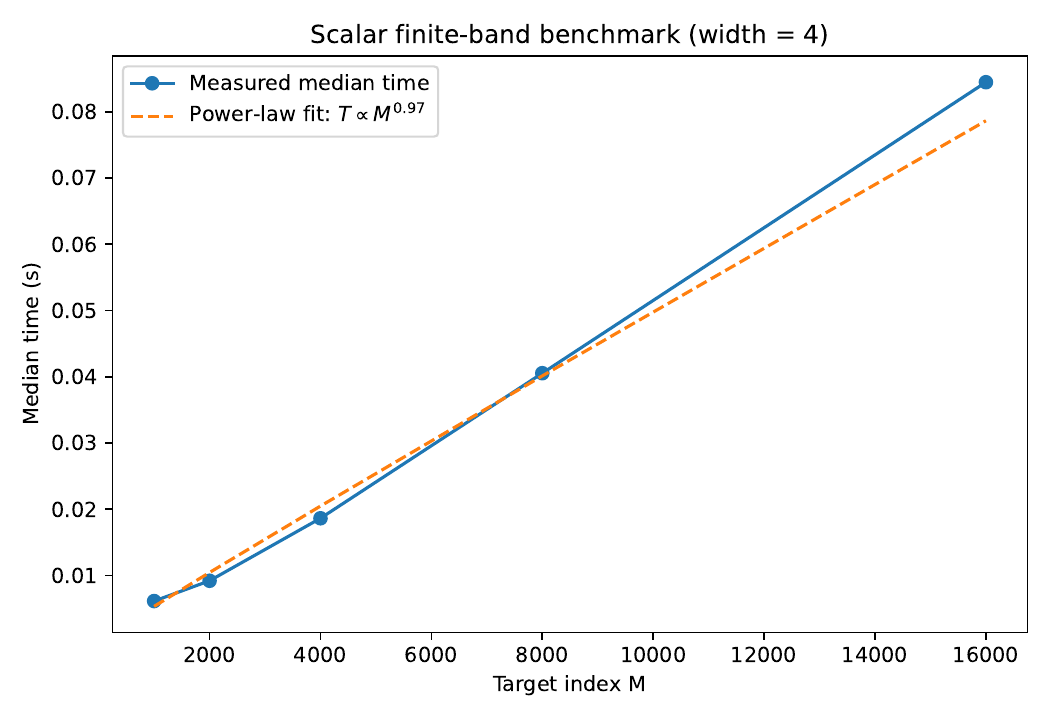}
\caption{Measured median running time for the scalar finite-band
benchmark with fixed bandwidth $\omega=4$, together with a power-law
fit $T(M)\sim cM^{0.97}$.}
\label{fig:scalar-benchmark}
\end{figure}

\subsection{Bit complexity and coefficient growth}

The arithmetic estimates above treat operations in the coefficient
field as unit-cost operations.  In symbolic applications, however,
the degrees and heights of the coefficients may grow with the target
index.  A complete bit-complexity analysis would therefore require
independent bounds on coefficient growth.

We do not pursue such bounds here.  The complexity results above
describe the structural cost of the finite-band reduction itself.

\subsection{Comparison of reduction representations}

The aim of the proposed method is reduction rather than the construction
of an elementary antiderivative.  In particular, the basic integrals
need not be elementary.  A meaningful computational comparison should
therefore be made at the level of the resulting reduction
representation.

For a target $I_M$, we record the profile
\[
\mathcal C_K(M)
=
\bigl(
\omega,\,
N_{\mathrm{basic}},\,
N_{\mathrm{nz}},\,
T,\,
S
\bigr),
\]
where $\omega$ is the recurrence width,
$N_{\mathrm{basic}}$ is the number of basic integrals,
$N_{\mathrm{nz}}$ is the number of nonzero reduction coefficients,
$T$ is the running time, and $S$ measures the size of the symbolic
representation.

This comparison does not require the basic integrals to be elementary.
The resulting representation
\[
I_M=
\sum_j r_{M,j}I_{m_j}+F_M(x)
\]
is useful precisely because it separates the explicitly computable
part from the finite set of basic integrals.

\subsection{Structured acceleration}

The entries of the finite-band matrices are generated by polynomial
coefficient sequences and have a prescribed dependence on the moment
index.  This suggests investigating whether additional Toeplitz-like,
displacement-rank, or related structure can be exploited for faster
reduction of families of target integrals.  We leave this question for
future work.
\section{Conclusion}

We have developed a finite-band framework for the reduction of families
of integrals arising from differential relations.  The construction
covers rational logarithmic derivatives, generalized real
Schwarz--Christoffel integrals, algebraic logarithmic derivatives, and
more general linear differential equations with polynomial
coefficients.  For a prescribed target integral, the reduction requires
only finite data and does not depend on an explicit evaluation of the
basic integrals, which may be non-elementary.

\section*{Software availability}

The reduction algorithms described in this paper are implemented in the
Mathematica package \emph{BandedReduction: Banded Reduction of Integrals for
Hyperexponential and Algebraic Functions} \cite{BandedReduction}, referred to
as \texttt{BandedReduction}. The package includes routines for the scalar
first-order reduction, the generalized Schwarz--Christoffel specialization,
algebraic block-band reduction, and reduction derived from linear differential
equations with polynomial coefficients. It also contains regression tests and
a compact user-facing report for the resulting reductions. The software is
archived on Zenodo under DOI 10.5281/zenodo.22980629.

\end{document}